\newtheorem{theorem}{Theorem}[section]
\newtheorem{lemma}[theorem]{Lemma}
\newtheorem{proposition}[theorem]{Proposition}
\newtheorem{corollary}[theorem]{Corollary}
\newtheorem{remark}[theorem]{Remark}
\newtheorem{definition}[theorem]{Definition}
\DeclareMathOperator{\Dir}{Dir}
\DeclareMathOperator{\Succ}{Succ}
\DeclareMathOperator{\Pred}{Pred}
\DeclareMathOperator{\Supp}{Supp}
\DeclareMathOperator{\en}{en}
\DeclareMathOperator{\st}{st}
\DeclareMathOperator{\HH}{H}
\DeclareMathOperator{\factors}{factors}
\begin{document}

\title[Homogeneous components in the Dixmier conjecture]{Number of homogeneous components of counterexamples to the Dixmier conjecture}

\author{Jorge A. Guccione}
\address{Departamento de Matem\'atica\\ Facultad de Ciencias Exactas y Naturales-UBA, Pabell\'on~1-Ciudad Universitaria\\ Intendente Guiraldes 2160 (C1428EGA) Buenos Aires, Argentina.}
\address{Instituto de Investigaciones Matem\'aticas ``Luis A. Santal\'o''\\ Pabell\'on~1-Ciudad Universitaria\\ Intendente Guiraldes 2160 (C1428EGA) Buenos Aires, Argentina.}
\email{vander@dm.uba.ar}

\author{Juan J. Guccione}
\address{Departamento de Matem\'atica\\ Facultad de Ciencias Exactas y Naturales-UBA\\ Pabell\'on~1-Ciudad Universitaria\\ Intendente Guiraldes 2160 (C1428EGA) Buenos Aires, Argentina.}
\address{Instituto Argentino de Matem\'atica-CONICET\\ Saavedra 15 3er piso\\ (\!C1083ACA\!) Buenos Aires, Argentina.}
\email{jjgucci@dm.uba.ar}

\thanks{Jorge A. Guccione and Juan J. Guccione were supported by CONICET PIP 2021-2023 GI,11220200100423CO and CONCYTEC-FONDECYT within the framework of the contest ``Proyectos de Investigaci\'on B\'asica 2020-01'' [contract number 120-2020-FONDECYT]}

\author{Christian Valqui}
\address{Pontificia Universidad Cat\'olica del Per\'u, Secci\'on Matem\'aticas, PUCP, Av. Universitaria 1801, San Miguel, Lima 32, Per\'u.}

\address{Instituto de Matem\'atica y Ciencias Afines (IMCA) Calle Los Bi\'ologos 245. Urb San C\'esar. La Molina, Lima 12, Per\'u.}
\email{cvalqui@pucp.edu.pe}

\thanks{Christian Valqui was supported by CONCYTEC-FONDECYT within the framework of the contest ``Proyectos de Investigaci\'on B\'asica 2020-01'' [contract number 120-2020-FONDECYT]}

\subjclass[2020]{16S32, 16W20, 16W50}

\keywords{Weyl algebra, Dixmier Conjecture, Newton Polygon}

\begin{abstract}
Assume that $P$ and $Q$ are elements of $A_1$ satisfying $[P,Q] = 1$. The Dixmier Conjecture for $A_1$ says that they always generate $A_1$. We show that if $P$ is a sum of not more than~$4$ homogeneous elements of $A_1$ then $P$ and $Q$ generate $A_1$, which generalizes the main result in~\cite{HT}.
\end{abstract}	
	
\maketitle

\tableofcontents

\section*{Introduction}

The first Weyl algebra $A_1$ over a characteristic zero field $K$ is generated by $X,Y$ with $[Y,X]=YX-XY=1$. In~\cite{D} Dixmier posed six questions. The first asks if every endomorphism of $A_1$ is an automorphism, i.e., if, for some $P,Q\in A_1$, we have $[P,Q]=1$, does it imply that $P$ and $Q$ generate $A_1$? The Dixmier conjecture generalizes this question and states that any endomorphism of the $n$-th Weyl algebra $A_n$ is an automorphism, for a characteristic zero field $K$. In the early~1980s, L. Vaserstein and V. Kac demonstrated that the generalized DC implies the Jacobian conjecture (refer to~\cite{BCW} for explicit verification of this result). In~2005, Yoshifumi Tsuchimoto established the stable equivalence between the Dixmier and Jacobian conjectures~\cite{T}, a result independently confirmed in~2007 by Alexei Belov-Kanel and Maxim Kontsevich~\cite{BK}, as well as by Pascal Kossivi Adjamagbo and Arno van den Essen~\cite{AE}. A concise proof of the equivalence between these two problems can be found in~\cite{B1}.

One approach to try to solve the conjecture for $A_1$ is the minimal counterexample strategy. We assume that the Dixmier conjecture is false and find properties that a minimal counterexample must satisfy. In~\cite{GGV1} we prove that for a minimal counterexample $(P,Q)$, the greatest common divisor of the total degrees of $P$ and~$Q$ is greater than~$15$. Another way to measure the size of elements in $A_1$ is the mass, as introduced in~\cite{BL}. Consider the $\mathds{Z}$-graduation of $A_1$ defining the $i$-th homogeneous component as $K[YX]X^i$ if $i\ge0$ and $K[YX]Y^{-i}$ if $i<0$. Then the mass $m(P)$, of an element $P\in A_1$, is the number of nonzero homogeneous components of $P$.  In~\cite{BL} it is shown that if both $P$ and $Q$ have mass lower than or equal to~$2$, then $(P,Q)$ cannot be a counterexample to the Dixmier conjecture. In~\cite{HT} this result is improved showing that if one of $P$, $Q$ has mass lower than or equal to~$2$, then it cannot be a counterexample. In the present paper we improve these results and show in Corollary~\ref{resultado principal}, that  if one of $P$, $Q$ has mass lower than or equal to~$4$, then $(P,Q)$ cannot be a counterexample to the Dixmier conjecture. For this we use the basic geometry description of the support of elements in $A_1$ introduced in~\cite{GGV1}, and some results of~\cite{HT}. We also use that the $(\rho,\sigma)$-leading term of $P$ can be described by an univariate polynomial, and the number of elements in the support of that polynomial yields a lower bound for $m(P)$ if $\rho+\sigma>0$ (See Remark~\ref{soporte da cota}). We also show in Proposition~\ref{potencia de tres} that under some conditions the support has at least~$5$ elements, which gives us the desired lower bound under that conditions. Then we analyse all possible cases for the leading term $\ell_{1,1}(P)$ and show that these conditions are satisfied in the relevant cases. Moreover, since the mass does not change by scalar extensions, we can assume without loss of generality, that $K$ is algebraically closed, and we do it (note that in many places this hypothesis is not used).

\section{Preliminaries}
We adopt the notations of~\cite{GGV2} for directions and leading terms. We define the set of directions by
\begin{equation*}
\mathfrak{V} \coloneqq  \{(\rho,\sigma)\in \mathds{Z}^2: \text{$\gcd(\rho,\sigma) = 1$} \}.
\end{equation*}
We also set $\mathfrak{V}_{>0}\coloneqq \{(\rho,\sigma)\in \mathfrak{V}: \rho+\sigma>0\}$. We assign to each direction its corresponding unit vector in $S^1$, and we define an {\em in\-ter\-val} in $\mathfrak{V}$ as the preimage under this map of an arc of $S^1$ that is not the whole circle. We consider each~in\-ter\-val endowed with the order that increases counterclockwise. Clearly $\mathfrak{V}_{>0}$ is an~in\-ter\-val, and the order is given by
\begin{equation}\label{orden de direcciones}
(\rho,\sigma)<(\rho',\sigma')\Longleftrightarrow (\rho,\sigma)\times (\rho',\sigma')>0,
\end{equation}
where $(a,b)\times (c,d)\coloneqq ad-bc$. This order extends to the closed interval $\mathfrak{V}_{\ge 0}$, setting $(1,-1)<(\rho,\sigma)<(-1,1)$ for $(\rho,\sigma)\in\mathfrak{V}_{>0}$. For $(i,j)\in \mathds{Z}^2$ and $(\rho,\sigma)\in \mathfrak{V}$, we set $v_{\rho,\sigma}(i,j)\coloneqq \rho i+\sigma j$ and for $R=\sum a_{ij}x^i y^j\in L\coloneqq K[x,y]$, we consider the valuation $v_{\rho,\sigma}(R)\coloneqq \max\left\{v_{\rho,\sigma}(i,j): a_{ij}\ne 0\right\}$. Note that $v_{\rho,\sigma}(R) = -\infty$ if and only if $R=0$. For $R\in L$, we set
$$
\ell_{\rho,\sigma}(R)\coloneqq \displaystyle{\sum_{\{\rho i +\sigma j = v_{\rho,\sigma}(R)\}}} a_{ij} x^{i} y^j.
$$
We consider the linear isomorphism $\psi\colon A_1\to L$, which sends $X^iY^j$ to $x^i y^j$, and for $P\in A_1$, we define
$$
v_{\rho,\sigma}(P)\coloneqq v_{\rho,\sigma}(\psi(P))\quad\text{and}\quad \ell_{\rho,\sigma}(P)\coloneqq \ell_{\rho,\sigma}(\psi(P))\in L.
$$
Let $P\in A_1\setminus\{0\}$. Then the support of $P=\sum_{i,j} a_{ij} X^iY^j$ is
$$
\Supp(P)\coloneqq \{(i,j)\in \mathds{N}_0: a_{ij}\ne 0\}.
$$
For each $P\in A_1\setminus\{0\}$, we let $\HH(P)$ denote the convex hull of the support of $P$. As it is well known, $\HH(P)$ is a polygon, called the {\em Newton polygon of $P$}, and it is evident that each one of its edges is the convex hull of the support of $\ell_{\rho,\sigma}(P)$, where $(\rho,\sigma)$ is orthogonal to the given edge and points outside of $\HH(P)$.

\smallskip

Let $(\rho,\sigma)\in \mathfrak{V}$ arbitrary and $P\ne 0$. We let $\st_{\rho,\sigma}(P)$ and $\en_{\rho,\sigma}(P)$ denote the first and the last point that we find on $H(\ell_{\rho,\sigma}(P))$ when we run counterclockwise along the boundary of $\HH(P)$. Note that these points coincide when $\ell_{\rho,\sigma}(P)$ is a monomial.

\begin{remark}\label{soporte da cota} Note that the $\mathds{Z}$-graduation on $A_1$ is induced by the $v_{1,-1}$ valuation. In fact, for $i\ge 0$, an element $P\in A_1\setminus\{0\}$ lies in $K[YX]X^i$ if and only if $P$ is $(1,-1)$-homogeneous and $v_{1,-1}(P)=i$, and similarly, an element $P\in A_1\setminus\{0\}$ lies in $K[YX]Y^i$ if and only if $P$ is $(1,-1)$-homogeneous and $v_{1,-1}(P)=-i$. Assume that $(\rho,\sigma)\in \mathfrak{V}_{>0}$ and let $P\in A_1\setminus\{0\}$. Then, we have
$$
\st_{\rho,\sigma}(P)=\Supp(\ell_{1,-1}(\ell_{\rho,\sigma}(P))) \quad\text{and}\quad \en_{\rho,\sigma}(P)=\Supp(\ell_{-1,1}(\ell_{\rho,\sigma}(P))).
$$
If we now write
$$
\ell_{\rho,\sigma}(P)=x^i y^j \sum_{l=0}^n a_l x^{-\sigma l}y^{\rho l},
$$
with $a_0\ne 0$ and $a_n\ne 0$, then $v_{1,-1}(a_l x^{-\sigma l}y^{\rho l})=-l(\rho+\sigma)$, and so
\begin{equation}\label{calculo de st y en}
\st_{\rho,\sigma}(P) = (i,j) \qquad\text{and}\qquad \en_{\rho,\sigma}(P) = (i,j)+n(-\sigma,\rho).
\end{equation}
When additionally $\rho>0$, we define $f_P(y) = f_{P,\rho,\sigma}(y)\coloneqq \sum_{l=0}^n a_l y^{\rho l}\in K[y]$, which is the poly\-nomial $f_{P,\rho,\sigma}^{(1)}$ introduced in~\cite{GGV1}*{Definition~2.8}.
Note that
\begin{equation}\label{relacion entre ell y f}
\deg(f_P) = n\rho\qquad\text{and}\qquad \ell_{\rho,\sigma}(P) = x^i y^j f_P(z)\quad\text{where $z\coloneqq x^{-\sigma/\rho}y$.}
\end{equation}
For each univariate polynomial $f$, we set $t(f)\coloneqq \#\Supp(f)$. Since $\rho+\sigma>0$, each monomial $a_l x^{-\sigma l}y^{\rho l}$, with $a_l\ne 0$, is in a different $(1,-1)$-homogeneous component of $\ell_{\rho,\sigma}(P)$, and so
\begin{equation}\label{m contra t}
m(P)\ge m(\ell_{\rho,\sigma}(P)) = \#\Supp(\ell_{\rho,\sigma}(P)) = t(f_P).
\end{equation}
\end{remark}

For $P\in A_1\setminus\{0\}$, we define
$$
\Dir(P)\coloneqq \{(\rho,\sigma)\in\mathfrak{V}:\#\Supp(\ell_{\rho,\sigma}(P))>1\}.
$$
Suppose that $P\in A_1$ is not a monomial and let $(\rho,\sigma)\in\mathfrak{V}$ arbitrary. We define the {\em successor} $\Succ_P(\rho,\sigma)$ of $(\rho,\sigma)$ to be the first element of $\Dir(P)$ that one encounters starting from $(\rho,\sigma)$ and running counterclockwise, and the {\em predecessor} $\Pred_P(\rho,\sigma)$, to be the first one, if we run clockwise.

\begin{proposition} \label{no estan en D mayor o igual a cero} Assume that $(P,Q)$ is a counterexample to the DC (this means that $P$ and $Q$ do not generate $A_1$ and that $[P,Q] = 1$). Then, we have $v_{1,-1}(P)>0$ and $v_{-1,1}(P)>0$.
\end{proposition}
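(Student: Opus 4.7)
The plan is to argue by contradiction. The anti-automorphism $\sigma$ of $A_1$ sending $X \mapsto Y$ and $Y \mapsto X$ (and reversing multiplication) satisfies $\sigma(X^i Y^j) = X^j Y^i$, so it reflects Newton polygons across the diagonal $i = j$ and interchanges $v_{1,-1}$ with $v_{-1,1}$. Since $\sigma([P,Q]) = -[\sigma P, \sigma Q]$, the pair $(\sigma P, -\sigma Q)$ satisfies $[\sigma P, -\sigma Q] = 1$ and generates the same subalgebra as $(\sigma P, \sigma Q)$, hence is a counterexample whenever $(P, Q)$ is. It therefore suffices to prove $v_{1,-1}(P) > 0$ for an arbitrary counterexample, since then $v_{-1,1}(P) > 0$ follows by applying the statement to $(\sigma P, -\sigma Q)$.

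Suppose for contradiction that $v_{1,-1}(P) \leq 0$; then $P$ lies in the subalgebra $B_{-} := \bigoplus_{i \leq 0} A_1^{(i)}$, generated by $Y$ and $YX$. The decisive structural observation is that $B_-$ carries the two-sided ideal $I := \bigoplus_{i < 0} A_1^{(i)}$ (the ideal generated by $Y$ in $B_-$), with commutative quotient $B_- / I \cong A_1^{(0)} = K[YX]$ and $1 \notin I$. Hence if $Q$ were also in $B_-$, the commutator $[P, Q] \in B_-$ would reduce modulo $I$ to $[\bar{P}, \bar{Q}] = 0$ in $K[YX]$, placing $[P, Q] \in I$ and contradicting $[P, Q] = 1$. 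So $v_{1,-1}(Q) > 0$.

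Setting $d := v_{1,-1}(P) \le 0$, $k := -d$, $e := v_{1,-1}(Q) > 0$, write $\ell_{1,-1}(P) = p(YX) Y^k$ and $\ell_{1,-1}(Q) = q(YX) X^e$. The $(1,-1)$-homogeneous component of $[P,Q] = 1$ at the top degree $d+e$ is $[P_d, Q_e]$, which must vanish if $d+e > 0$ and equal $1$ if $d+e = 0$. Using the identities $Y^k X^k = \prod_{i=0}^{k-1}(YX + i)$ and $X^k Y^k = \prod_{i=1}^{k}(YX - i)$, one computes $[P_d, Q_e]$ explicitly as a polynomial in $u := YX$ times a monomial. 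When $d = 0$ the top equation forces $p$ to be a scalar, which we subtract from $P$ (preserving both $[P,Q]$ and $\langle P, Q\rangle$) to reduce to $d < 0$; in the remaining cases, combining the top equation with the polynomial identities in $u$ coming from the vanishing of each lower $(1,-1)$-component of $[P,Q] - 1$ yields either an immediate contradiction or reduces to the special case $k = 1$ with $P_d$ a scalar multiple of $Y$, from which elementary commutator manipulations give $\langle P, Q\rangle = A_1$, contradicting that $(P, Q)$ is a counterexample.

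The main obstacle will be the polynomial analysis: in particular, showing that for $k \geq 2$ the identity $p(u) q(u+k) \phi_k(u) - q(u) p(u-k) \phi_k(u-k) = 1$, with $\phi_k(u) := u(u+1) \cdots (u+k-1)$, admits no solution $(p, q)$ in $K[u] \times K[u]$, and more generally that the aggregate identity $\sum_{j \geq 1} [P_{-j}, Q_j] = 1$ in $K[YX]$ (the $(1,-1)$-degree-$0$ component of $[P, Q] = 1$) forces the degenerate case above. A leading-coefficient-in-$u$ analysis of the individual commutators $[P_{-j}, Q_j]$ for $j \geq 2$, showing that they cannot combine to produce the element $1$, is expected to be the technical heart.
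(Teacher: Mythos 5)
Your two preliminary reductions are correct: the anti\-automorphism $\sigma$ with $\sigma(X^iY^j)=X^jY^i$ does interchange $v_{1,-1}$ and $v_{-1,1}$ and sends a counterexample $(P,Q)$ to the counterexample $(\sigma P,-\sigma Q)$, so it suffices to treat $v_{1,-1}(P)$; and the subalgebra/ideal argument with $B_-=\bigoplus_{i\le 0}A_1^{(i)}$ and $I=\bigoplus_{i<0}A_1^{(i)}$ does show that $Q\notin B_-$, i.e.\ $v_{1,-1}(Q)>0$, whenever $v_{1,-1}(P)\le 0$. But after this the proposal stops being a proof. You write ``one computes,'' ``yields either an immediate contradiction or reduces to the special case,'' and finally ``is expected to be the technical heart.'' That last phrase is an explicit acknowledgement that the decisive step has not been carried out. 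The decisive step is genuinely nontrivial: the degree-zero equation you need is the aggregate $\sum_{j\ge 1}[P_{-j},Q_j]=1$ in $K[YX]$, and a leading-coefficient analysis of a single $[P_{-j},Q_j]$ does not control the sum because cancellation between terms of different $j$ has to be excluded. Note also that no ``immediate contradiction'' can be expected in general, since there do exist pairs with $[P,Q]=1$ and $v_{1,-1}(P)\le 0$ (e.g.\ $P=Y$, $Q=X$); what must be shown is that every such pair generates $A_1$, which is exactly the content you defer.

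The paper's proof of this proposition is a one-line citation of \cite{HT}*{Theorem~3.7}, which is precisely the assertion that $P$ cannot lie in $D_{\le 0}$ or $D_{\ge 0}$ for a counterexample. In other words, the result you are trying to re-derive from scratch is the theorem of Han--Tan that the paper simply invokes. To turn your proposal into a proof you would have to reproduce (or replace) the argument of that theorem; the symmetry reduction and the $B_-$ observation are sound scaffolding, but by themselves they do not establish the proposition.
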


\begin{proof} By~\cite{HT}*{Theorem~3.7} we know that $P$ cannot be in
$$
D_{\le 0}\coloneqq \{R\in A_1, v_{1,-1}(R)\le 0\}\quad\text{nor in}\quad D_{\ge 0}\coloneqq\{R\in A_1, v_{-1,1}(R)\le 0\}.
$$
Thus $v_{1,-1}(P)>0$ and $v_{-1,1}(P)>0$.
\end{proof}

\begin{remark} By the previous proposition, if $(P,Q)$ is a counterexample to the DC, then $P\notin K[X]\cup K[Y]$ and $P$ is not a monomial. We will use freely these facts.
\end{remark}

\begin{proposition}\label{v mayor que cero} If $(P,Q)$ is a counterexample and $(\rho,\sigma)\!\in\! \mathfrak{V}_{>0}$, then $v_{\rho,\sigma}(P)\!>\!0$.
\end{proposition}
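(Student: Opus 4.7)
The plan is to combine the two strict inequalities produced by Proposition~\ref{no estan en D mayor o igual a cero} so as to cover every direction in $\mathfrak{V}_{>0}$. From $v_{1,-1}(P)>0$ I extract a point $(i_1,j_1)\in\Supp(P)$ with $i_1>j_1\ge0$, hence $i_1\ge1$; from $v_{-1,1}(P)>0$ I extract a point $(i_2,j_2)\in\Supp(P)$ with $j_2>i_2\ge0$, hence $j_2\ge1$. These are my two witnesses: one sits strictly to the right of the diagonal and the other strictly above it.

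Next I would use the arithmetic dichotomy forced by $(\rho,\sigma)\in\mathfrak{V}_{>0}$: since $\rho,\sigma\in\mathds{Z}$ and $\rho+\sigma>0$, the two integers cannot both be nonpositive, so at least one of them is $\ge1$. I handle the two possibilities by means of the identities
\[
\rho i+\sigma j=\rho(i-j)+(\rho+\sigma)j=\sigma(j-i)+(\rho+\sigma)i,
\]
which rewrite $v_{\rho,\sigma}(i,j)$ as a sum of two nonnegative terms as soon as $i\ge j\ge0$ (respectively $j\ge i\ge0$) and the appropriate leading coefficient is positive. Evaluating at $(i_1,j_1)$ when $\rho\ge1$ gives $v_{\rho,\sigma}(P)\ge\rho(i_1-j_1)+(\rho+\sigma)j_1\ge\rho\ge1$, and symmetrically evaluating at $(i_2,j_2)$ when $\sigma\ge1$ gives $v_{\rho,\sigma}(P)\ge\sigma(j_2-i_2)+(\rho+\sigma)i_2\ge\sigma\ge1$.

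There is no real obstacle here: the argument reduces to a case split on which coordinate of $(\rho,\sigma)$ is positive and to the choice of which of the two witnesses provided by the previous proposition to evaluate at. Boundary directions with a zero coordinate, namely $(1,0)$ and $(0,1)$, already fall into one of the two cases and require no separate treatment.
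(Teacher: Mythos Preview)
Your proof is correct and follows essentially the same approach as the paper: extract two witness points in $\Supp(P)$ from Proposition~\ref{no estan en D mayor o igual a cero} (one with $i>j$, one with $j>i$) and then bound $v_{\rho,\sigma}(P)$ below by evaluating at the appropriate witness according to the sign pattern of $(\rho,\sigma)$. The paper splits into four cases on the signs of $\rho$ and $\sigma$, whereas your identity $\rho i+\sigma j=\rho(i-j)+(\rho+\sigma)j=\sigma(j-i)+(\rho+\sigma)i$ lets you collapse this to a two-case split on which coordinate is $\ge1$; this is a minor cosmetic streamlining, not a different idea.
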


\begin{proof} By Proposition~\ref{no estan en D mayor o igual a cero} we know that $v_{1,-1}(P)>0$ and $v_{-1,1}(P)>0$. Consequently there are points $(i,j),(k,l)\in \Supp(P)$ with $i>j$ and $k<l$.

\begin{itemize}

\item[-] If $\rho>0$ and $\sigma\ge 0$, then $v_{\rho,\sigma}(P)\ge v_{\rho,\sigma}(i,j)=\rho i+\sigma j \ge \rho i >0$,

\item[-] if $\rho>0$ and $\sigma< 0$, then $v_{\rho,\sigma}(P)\ge v_{\rho,\sigma}(i,j)=\rho i+\sigma j \ge \rho i+\sigma i>0$,

\item[-] if $\sigma>0$ and $\rho\ge 0$, then $v_{\rho,\sigma}(P)\ge v_{\rho,\sigma}(k,l)=\rho k+\sigma l \ge \sigma l>0$,

\item[-] if $\sigma>0$ and $\rho< 0$, then $v_{\rho,\sigma}(P)\ge v_{\rho,\sigma}(k,l)=\rho k+\sigma l \ge \rho l+\sigma l>0$,

\end{itemize}
as desired.
\end{proof}

\section{Support of univariate polynomials}

Let $f\in K[x]\setminus\{0\}$. Then the support of $f=\sum_ja_j x^j$ is
$$
\Supp(f)\coloneqq \{j\in \mathds{N}_0: a_j\ne 0\}.
$$
The number of non-zero terms of $f$ is called $t(f)\coloneqq \# \Supp(f)$. We consider the equivalence relation on $K[x]$, generated by
\begin{itemize}

\item[-] $f\simeq \lambda f$ for all $\lambda\in K^{\times}$,

\item[-] $f(x)\simeq f(\lambda x)$ for all $\lambda\in K^{\times}$,

\item[-] $f(x)\simeq f(x^k)$ for all $k\in \mathds{N}$,

\item[-] $f(x)\simeq x^n f(1/x)$, where $n = \deg(f)$.
\end{itemize}
Clearly, if $f\simeq g$, then $f^k\simeq g^k$ and $t(f^k)=t(g^k)$, for all $k\in\mathds{N}$. Furthermore, since $K$ is algebraically closed, every polynomial $f$ with $f(0)\ne 0$, is equivalent to a polynomial of the form $g=1+x^j+\sum_{i=j+1}^n a_i x^i$. We will use these facts freely.

\begin{remark}\label{facilongo} Note that if $f = \sum_{i=0}^n a_i x^i$ with $a_n\ne 0$, then $x^n f(1/x) = \sum_{i=0}^n a_{n-i} x^{i}$.
\end{remark}

\begin{remark}\label{raiz k esima} Let $p = 1 + xq$ with $q\in K[[x]]$ and let $k\in \mathds{N}$. It is well known that
$$
u\coloneqq (1+xq)^{1/k} = \sum_{i\ge 0} \binom{1/k}{i} (xq)^i
$$
is the unique formal power series such that $u(0) = 1$ and $u^k = p$.
\end{remark}

\begin{proposition}\label{potencia de tres} Let $f\in K[x]$ such that $t(f)\ge 3$. Then, for all $k\ge 2$, we have
\begin{enumerate}[itemsep=0.7ex, topsep=1.0ex, label={\emph{\arabic*)}}]

\item $t(f^k)\ge 4$.

\item If $t(f^k)= 4$, then $k=2$ and $f\simeq 1+x-\frac 12 x^2$.
\end{enumerate}
\end{proposition}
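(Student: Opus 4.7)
The plan is to normalize $f$ using the equivalence relations so that $f(0)=1$, the coefficient of the lowest-degree positive monomial $x^j$ of $f$ equals $1$, and $\gcd\Supp(f)=1$. Writing $f=1+g$ with $g$ of minimum degree $j$, the expansion $f^k=\sum_{i=0}^k \binom{k}{i} g^i$ together with the fact that $g^i$ has minimum degree $ij$ makes the low-degree coefficients of $f^k$ transparent: for $0<e<2j$, the coefficient of $x^e$ in $f^k$ equals $k$ times the coefficient of $x^e$ in $f$, and the coefficient of $x^{2j}$ in $f^k$ equals $ka_{2j}+\binom{k}{2}$, where $a_{2j}$ denotes the coefficient of $x^{2j}$ in $f$.

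The proof proceeds by case analysis on the low-end structure of $f$. If $f$ has a nonzero term at $x^l$ with $j<l<2j$ (Case A), then the coefficients of $x^0,x^j,x^l$ in $f^k$ are all nonzero; combined with the leading term at $x^{k\deg f}$ and one further contribution from $x^{2j}$, $x^{l+j}$, or the second-leading term of $f^k$, we obtain $t(f^k)\ge 5$. Otherwise $f$ has no term in $(j,2j)$ (Case B), and the coefficient of $x^{2j}$ in $f^k$ vanishes if and only if $a_{2j}=-(k-1)/2$. When $a_{2j}\ne-(k-1)/2$ (Sub-case B1), the exponents $0,j,2j$ together with the leading exponent $k\deg f$ and the second-leading exponent $(k-1)\deg f+n'$ of $f^k$ (where $n'$ is the second-largest exponent of $f$) give five distinct elements of $\Supp(f^k)$, so $t(f^k)\ge 5$.

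The delicate Sub-case B2 is $a_{2j}=-(k-1)/2$, which forces $2j\in\Supp(f)$ and kills $x^{2j}$ in $f^k$. If $\deg f=2j$, then the $\gcd$-normalization gives $j=1$, so $f=1+x-\tfrac{k-1}{2}x^2$. A direct computation shows the coefficients of $x^3,x^{2k-1},x^{2k}$ in $f^k$ to be $-\tfrac{k(k-1)(2k-1)}{6}$, $k\bigl(-\tfrac{k-1}{2}\bigr)^{k-1}$, and $\bigl(-\tfrac{k-1}{2}\bigr)^{k}$ respectively, all nonzero for $k\ge 2$; together with the nonzero coefficients at $x^0$ and $x^1$, these exhibit exponents $0,1,3,2k-1,2k$ in $\Supp(f^k)$. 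For $k\ge 3$ we have $3<2k-1$, giving five distinct exponents and $t(f^k)\ge 5$; for $k=2$ the values $3$ and $2k-1$ coincide and $f^2=1+2x-x^3+\tfrac{1}{4}x^4$, realizing the exceptional case $f\simeq 1+x-\tfrac{1}{2}x^2$.

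The main technical obstacle is the remaining case $\deg f>2j$ within Sub-case B2. The plan is to invoke the reflection equivalence $\tilde f=x^{\deg f}f(1/x)\simeq f$ of Remark~\ref{facilongo}, applying the low-end analysis to $\tilde f$: either $\tilde f$ falls into Case A or Sub-case B1 (immediately yielding $t(f^k)=t(\tilde f^k)\ge 5$), or $\tilde f$ is also in Sub-case B2, in which case the cancellation conditions at both ends of $f$, combined with the uniqueness of formal $k$-th roots (Remark~\ref{raiz k esima}), force $f$ to coincide with the exceptional polynomial already analyzed, contradicting $\deg f>2j$.
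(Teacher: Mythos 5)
Your case analysis by the low-end structure of $f$ is a genuinely different organizational scheme from the paper's, and the elementary exponent-counting in Cases A and B1 is sound: after observing that the ``second-leading'' exponent $(k-1)\deg f+n'$ of $f^k$ always carries a nonzero coefficient and is strictly between the other identified exponents and $k\deg f$, one indeed gets five distinct elements of $\Supp(f^k)$ in both of those cases. The direct computation in Sub-case B2 with $\deg f=2j$ is also correct (I checked the coefficient of $x^3$ and the $k=2$ expansion). Up to that point the write-up is more elaborate than, but compatible with, the paper's proof, which instead goes straight to the four exponents $\{0,j,(k-1)n+l,kn\}\subseteq\Supp(f^k)$ and only then separates cases.

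The genuine gap is the final subcase: $f$ in B2 with $\deg f>2j$ and $\tilde f=x^{\deg f}f(1/x)$ also in B2. You assert that ``the cancellation conditions at both ends of $f$, combined with the uniqueness of formal $k$-th roots, force $f$ to coincide with the exceptional polynomial,'' but this is not an argument — it is a statement of the conclusion. This is precisely the point at which the paper's proof does all its real work: writing $h=(1+kx^j)^{1/k}$, noting that $f^k$ and $h^k=1+kx^j$ agree below the second-leading exponent of $f^k$, deducing that $f$ agrees with $h$ through degree $\deg f$, hence that $\Supp(f)$ consists exactly of the multiples of $j$ up to $\deg f$ (since all coefficients $h_{js}$ are nonzero), and then solving $(k-1)n+l=n+j$ to get $k=2$ and $n=2j$. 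None of this is carried out in your proposal, and the reflection trick does not circumvent it: if $\tilde f$ is again in B2, you still need the power-series comparison, and if you perform it honestly you will find it immediately yields $(k-1)\deg f=2j$, contradicting $\deg f>2j$ — without any need to invoke the B2 condition on $\tilde f$ at all. As written, the proof is incomplete at exactly the spot where the statement is nontrivial.
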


\begin{proof} 1)\enspace Let $f=\sum a_i x^i$. Since $t(f)\ge 3$, replacing $f$ by an equivalent polynomial, we can assume that
$$
f= 1+\sum_{i=j}^n a_i x^i\quad\text{with $n>j$, $a_j=1$ and $a_n\ne 0$.}
$$
Let $l\coloneqq  \max\{i<n : a_i\ne 0\}$. Then
\begin{equation*}
f= 1+x^j+\sum_{i=j+1}^l a_i x^i+a_n x^n,
\end{equation*}
where $0<j\le l <n$. For all $k\ge 2$, we have
$$
f^k=1+k x^j+\sum_{i=j+1}^{n(k-1)+l-1} b_i x^i+ka_n^{k-1}a_lx^{n(k-1)+l}+a_n^k x^{nk},
$$
where $b_i$ is the coefficient of $x^i$ in $f^k$. Since $k\ge 2$, we have $0<j< n(k-1)+l<kn$, and so $\{0,j,n(k-1)+l,kn\}\subseteq \Supp(f^k)$, which implies that $t(f^k)\ge 4$.

\smallskip

\noindent 2)\enspace If $t(f^k)=4$, then $\Supp(f^k)=\{0,j,n(k-1)+l,kn\}$. Since
$$
f^k=1+k x^j+ka_n^{k-1}a_lx^{n(k-1)+l}+a_n^k x^{nk},
$$
we have $a_l\ne 0$ and $a_n\ne 0$. Moreover, by Remark~\ref{raiz k esima},
\begin{equation*}\label{nueva expresion f}
f = \sum_{i=0}^n a_i x^i =\sum_{i\ge 0} \binom{1/k}{i} \bigl(k x^j+ka_n^{k-1}a_lx^{n(k-1)+l}+a_n^k x^{nk}\bigr)^i
\end{equation*}
Comparing this with
\begin{equation*}\label{expresion h}
h\coloneqq (1+kx^j)^{1/k} = \sum_{s\ge 0} \binom{1/k}{s} k^sx^{js}=\sum_{i\ge 1}h_i x^i\in K[[x]]
\end{equation*}
and using that $n(k-1)+l>n$, we obtain that
$$
a_i = h_i\text{ for $0\le i< n(k-1)+l$}\qquad\text{and}\qquad  h_{n(k-1)+l}\ne a_{n(k-1)+l} = 0.
$$
Consequently,
$$
1+x^j+\sum_{i=j+1}^l a_i x^i+a_n x^n = f = \sum_{i=0}^{n(k-1)+l-1} h_ix^i\qquad\text{and}\qquad h_{n(k-1)+l}\ne 0.
$$
Since $h_i\ne 0$ if and only if $j\mid i$ and $n(k-1)+l>n$, this implies that
$$
l = qj,\quad n = (q+1)j\quad\text{and}\quad n(k-1)+l = (q+2)j\qquad\text{for some $q\in \mathds{N}$.}
$$
Thus $(q+1)j(k-1) + qj = (q+2)j$, which implies that $q=1$ and $k=2$.~Con\-se\-quently $l=j$ and $n=2j$, and so $\Supp(f)=\{0,j,n\}$. Since
$$
a_n=h_n=h_{2j}= \binom{1/2}{2}2^2=-\frac 12,
$$
we obtain $f=1+x^j-\frac 12 x^{2j}\simeq 1+x-\frac 12 x^2$, as desired.
\end{proof}

\begin{proposition}\label{factores lineales distintos} Let us assume that $f(0)\ne 0$ and $\deg(f)=n$. If $f\simeq 1+x-\frac{1}{2} x^2$ or $t(f)=2$, then $f$ has $n$ different linear factors in $K$ (recall that $K$ is algebraically closed).
\end{proposition}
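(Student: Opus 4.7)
The plan is to treat the two alternatives separately. If $t(f)=2$, then $f(0)\ne 0$ forces $f(x)=a_0+a_nx^n$ with $a_0,a_n\in K^{\times}$, and the $n$ roots of $f$ are the $n$-th roots of $-a_0/a_n$, which are pairwise distinct because $K$ is algebraically closed of characteristic zero; this yields $n$ different linear factors at once.

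For the alternative $f\simeq g$ with $g(x)=1+x-\tfrac{1}{2}x^2$, my plan is to show that the property \emph{``$p$ has $\deg(p)$ different linear factors in $K$''} is preserved in both directions by each of the four generating relations of $\simeq$, provided one stays within polynomials with nonzero constant term. Since $g$ has discriminant $3\ne 0$, this property holds for $g$, and by invariance it transfers to $f$. Preservation under $p\simeq\lambda p$ and $p(x)\simeq p(\lambda x)$ is automatic: the degree is unchanged and the root multiset is either unchanged or rescaled by $1/\lambda$. Preservation under the involution $p(x)\simeq x^{\deg p}p(1/x)$ reduces to noting that when every root $\alpha_i$ of $p$ is nonzero, this operation sends the root multiset $\{\alpha_i\}$ to $\{1/\alpha_i\}$, preserving the degree, nonvanishing at $0$, and distinctness.

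The only step meriting care is preservation under $p(x)\simeq p(x^k)$. If $p$ has $n$ distinct nonzero roots $\alpha_1,\dots,\alpha_n$, then each factor $x-\alpha_i$ of $p$ contributes the factor $x^k-\alpha_i$ to $p(x^k)$; because $K$ is an algebraically closed field of characteristic zero, it contains a primitive $k$-th root of unity, so $x^k-\alpha_i$ splits into $k$ pairwise distinct linear factors, and the fibres over different $\alpha_i$'s are disjoint. Hence $p(x^k)$ has $kn=\deg\bigl(p(x^k)\bigr)$ distinct roots. In the reverse direction, a root of multiplicity $e\ge 2$ of $p$ would contribute $k$ roots each of multiplicity $e$ to $p(x^k)$, so separability of $p(x^k)$ forces separability of $p$. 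The only additional bookkeeping is to check that $p(0)\ne 0$ is maintained under every generator, which is immediate; this ensures the whole chain connecting $f$ to $g$ stays inside the regime where each basic step is invertible at the level of root sets. I expect this bookkeeping to be the principal (minor) obstacle.
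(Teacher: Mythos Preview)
Your proof is correct and follows essentially the same route as the paper: both arguments establish that separability (having $\deg f$ distinct linear factors, equivalently $\gcd(f,f')=1$) is preserved under each of the four generating relations of~$\simeq$ on polynomials with nonzero constant term, and then check the base cases. The only cosmetic differences are that you handle the case $t(f)=2$ by a direct root computation whereas the paper observes $f\simeq 1+x$ and folds it into the invariance argument, and that you spell out the verification for each generator (especially $p(x)\simeq p(x^k)$) while the paper leaves these details to the reader.
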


\begin{proof} It suffices to show that $\gcd(f,f') = 1$. Note that if $t(f) = 2$, then $f\simeq 1+x$. Since $\gcd(g,g') = 1$, for $g=1+x$ and $g=1+x-\frac{1}{2} x^2$, we are reduced to prove that if $f\simeq g$ and $f(0)\ne 0$, then $\gcd(f,f') = 1$ if and only if $\gcd(g,g') = 1$. For this we must consider each one of the four cases in the equivalence relation defined above Remark~\ref{facilongo}. We leave the details to the reader.
\end{proof}

Remember that $\psi\colon A_1\to L$ is the linear isomorphism which sends $X^iY^j$ to $x^i y^j$. Remember also from~\cite{GGV1}*{Definition~2.2}, that, for $P,Q\in A_1\setminus\{0\}$ and $(\rho,\sigma)\in \mathfrak{V}_{>0}$,
$$
[P,Q]_{\rho,\sigma}\coloneqq \begin{cases} 0 &\text{if $v_{\rho,\sigma}([P,Q]) < v_{\rho,\sigma}(P) + v_{\rho,\sigma}(Q) - (\rho+\sigma)$,}\\ \ell_{\rho,\sigma}([P,Q]) &\text{if $v_{\rho,\sigma}([P,Q]) = v_{\rho,\sigma}(P) + v_{\rho,\sigma}(Q) - (\rho+\sigma)$.}\end{cases}
$$
Note that $[P,Q]_{\rho,\sigma} = 0$ if and only if $v_{\rho,\sigma}([P,Q]) < v_{\rho,\sigma}(P) + v_{\rho,\sigma}(Q) - (\rho+\sigma)$.

\begin{lemma}\label{pepe0} For each $R,Q\in A_1\setminus\{0\}$ and $k\in \mathds{N}$ the followings facts hold:

\begin{enumerate}[itemsep=0.7ex, topsep=1.0ex, label={\emph{\arabic*)}}]

\item $\ell_{\rho,\sigma}([R^k,Q]) = k\ell_{\rho,\sigma}(R)^{k-1}\ell_{\rho,\sigma}([R,Q])$,

\item If $[R^k,Q]_{\rho,\sigma} \ne 0$, then $[R,Q]_{\rho,\sigma} \ne 0$.

\end{enumerate}
\end{lemma}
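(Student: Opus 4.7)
The plan is to obtain both claims from a term-wise analysis of the Leibniz expansion
\[
[R^k, Q] = \sum_{i=0}^{k-1} R^i\,[R,Q]\,R^{k-1-i},
\]
which is valid in any associative algebra.

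The one ingredient I need available (for $(\rho,\sigma)\in\mathfrak{V}_{>0}$, as is implicit from the definition of $[\cdot,\cdot]_{\rho,\sigma}$ preceding the lemma) is that $v_{\rho,\sigma}$ is a valuation on $A_1$ and that $\ell_{\rho,\sigma}$ is multiplicative on nonzero products, i.e.\ $\ell_{\rho,\sigma}(AB)=\ell_{\rho,\sigma}(A)\ell_{\rho,\sigma}(B)$ in $L$. This is standard (cf.\ \cite{GGV2}) and follows because normalising $Y^b X^a$ produces correction monomials of the form $X^{a-j}Y^{b-j}$ whose $(\rho,\sigma)$-values drop by $j(\rho+\sigma)>0$.

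Granted this, for part (1) I first dispose of the trivial case $[R,Q]=0$, in which $[R^k,Q]=0$ and both sides vanish. When $[R,Q]\ne 0$, multiplicativity together with the commutativity of $L=K[x,y]$ gives
\[
\ell_{\rho,\sigma}\bigl(R^i[R,Q]R^{k-1-i}\bigr)=\ell_{\rho,\sigma}(R)^i\,\ell_{\rho,\sigma}([R,Q])\,\ell_{\rho,\sigma}(R)^{k-1-i}=\ell_{\rho,\sigma}(R)^{k-1}\ell_{\rho,\sigma}([R,Q])
\]
for every $i$, and all $k$ summands share the common $(\rho,\sigma)$-value $(k-1)v_{\rho,\sigma}(R)+v_{\rho,\sigma}([R,Q])$. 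The sum of their leading parts is therefore $k$ times the same nonzero element of $L$, which is nonzero since $\operatorname{char} K=0$ and $L$ is a domain. No cancellation occurs and (1) follows.

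For part (2) I argue by contrapositive. Assume $[R,Q]_{\rho,\sigma}=0$, i.e.\ $v_{\rho,\sigma}([R,Q])<v_{\rho,\sigma}(R)+v_{\rho,\sigma}(Q)-(\rho+\sigma)$ (with the convention $v_{\rho,\sigma}(0)=-\infty$, which handles the case $[R,Q]=0$ uniformly). Combining with (1) gives
\[
v_{\rho,\sigma}([R^k,Q])\le (k-1)v_{\rho,\sigma}(R)+v_{\rho,\sigma}([R,Q])<v_{\rho,\sigma}(R^k)+v_{\rho,\sigma}(Q)-(\rho+\sigma),
\]
so $[R^k,Q]_{\rho,\sigma}=0$, as required. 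The only mild obstacle is to invoke the multiplicativity of $\ell_{\rho,\sigma}$ cleanly and to treat the edge case $[R,Q]=0$; after that the argument is bookkeeping.
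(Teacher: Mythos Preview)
Your argument is correct and follows essentially the same route as the paper: the Leibniz expansion $[R^k,Q]=\sum_i R^i[R,Q]R^{k-1-i}$ together with multiplicativity of $\ell_{\rho,\sigma}$ on nonzero products (the paper cites \cite{GGV1}*{Proposition~1.9(2)} for this) gives item~(1), and then item~(2) is obtained from the resulting valuation identity $v_{\rho,\sigma}([R^k,Q])=(k-1)v_{\rho,\sigma}(R)+v_{\rho,\sigma}([R,Q])$. The only cosmetic difference is that the paper argues (2) directly while you phrase it as the contrapositive, and you are slightly more explicit about the no-cancellation step and the degenerate case $[R,Q]=0$; neither changes the substance.
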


\begin{proof} To begin with note that
$$
\ell_{\rho,\sigma}([R^k,Q]) = \ell_{\rho,\sigma}\left(\sum R^i[R,Q]R^{k-i-1}\right) = k\ell_{\rho,\sigma}(R)^{k-1}\ell_{\rho,\sigma}([R,Q]),
$$
where the last equality follows easily from~\cite{GGV1}*{Proposition~1.9(2)}. So item~1) holds. Hence, by~\cite{GGV1}*{Proposition~1.9(3)}, we have
\begin{equation}\label{pepe3}
v_{\rho,\sigma}([R^k,Q]) = (k-1)v_{\rho,\sigma}(R) + v_{\rho,\sigma}([R,Q]).
\end{equation}
Moreover $v_{\rho,\sigma}([R^k,Q]) = v_{\rho,\sigma}(R^k) + v_{\rho,\sigma}(Q) - (\rho+\sigma)$, because $[R^k,Q]_{\rho,\sigma} \ne 0$. Since $v_{\rho,\sigma}(R^k) = kv_{\rho,\sigma}(R)$, from this and~\eqref{pepe3}, we obtain that
$$
v_{\rho,\sigma}([R,Q]) = v_{\rho,\sigma}(R) + v_{\rho,\sigma}(Q) - (\rho+\sigma).
$$
Hence $[R,Q]_{\rho,\sigma} \ne 0$, as desired.
\end{proof}

\begin{proposition}\label{F existe} Let $(P,Q)$ be a counterexample to the DC and let $(\rho,\sigma)\in \mathfrak{V}_{>0}$. Then $\ell_{\rho,\sigma}(P)=\mu \psi(R)^k$ for some $\mu\in K^{\times}$, some $k\ge 2$ and some $(\rho,\sigma)$-ho\-mo\-ge\-neous element $R\in A_1\setminus\{0\}$. Moreover, there exists a $(\rho,\sigma)$-ho\-mogeneous ele\-ment \hbox{$F\in A_1$}, such that
\begin{equation}\label{RF}
v_{\rho,\sigma}(F)=\rho+\sigma\quad\text{and}\quad [R,F]_{\rho,\sigma}=\psi(R).
\end{equation}
\end{proposition}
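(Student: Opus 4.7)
\emph{Plan.} For part~1 I would combine the classical Dixmier lemma with a descent argument. Proposition~\ref{v mayor que cero} gives $v_{\rho,\sigma}(P)>0$; applying it to $(-Q,P)$ also yields $v_{\rho,\sigma}(Q)>0$. A short case analysis rules out the degenerate situation $v_{\rho,\sigma}(P)+v_{\rho,\sigma}(Q)=\rho+\sigma$ (in which $P$ and $Q$ would be small enough to generate $A_1$, contradicting the counterexample hypothesis); hence $v_{\rho,\sigma}(P)+v_{\rho,\sigma}(Q)>\rho+\sigma$, which forces $[P,Q]_{\rho,\sigma}=0$ since $v_{\rho,\sigma}([P,Q])=0$. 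Under $\psi$ this becomes the vanishing of the Poisson bracket $\{\ell_{\rho,\sigma}(P),\ell_{\rho,\sigma}(Q)\}$ in $L$. Over the algebraically closed $K$, two polynomials in two variables with zero Jacobian are powers of a common element, so there exist a $(\rho,\sigma)$-homogeneous $T\in L$ and integers $a,b\ge 1$ with $\ell_{\rho,\sigma}(P)=\mu T^a$ and $\ell_{\rho,\sigma}(Q)=\nu T^b$. To promote $a\ge 1$ to $k\ge 2$, if $a=1$ I replace $Q$ by $Q'\coloneqq Q-\nu\mu^{-b}P^b$: the pair $(P,Q')$ is still a counterexample with $v_{\rho,\sigma}(Q')<v_{\rho,\sigma}(Q)$, and the Dixmier factorization of the new pair again forces $\ell_{\rho,\sigma}(Q')=\nu_1 T^{b_1}$ with $b_1<b$. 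Iterating, $b$ strictly decreases through positive integers, so after finitely many steps we would reach $b=0$, contradicting $v_{\rho,\sigma}(Q')>0$. Hence $a\ge 2$; setting $R\coloneqq\psi^{-1}(T)$ and $k\coloneqq a$ gives the desired factorization.

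For part~2 I would analyse the identity $[P,Q]=1$ at one $(\rho,\sigma)$-order below the leading one. Writing $P=\mu R^k+P_1$ and $Q=\nu R^{k'}+Q_1$ with $v_{\rho,\sigma}(P_1)<v_{\rho,\sigma}(P)$, $v_{\rho,\sigma}(Q_1)<v_{\rho,\sigma}(Q)$, and using $[R^k,R^{k'}]=0$, the identity becomes
\[
\mu[R^k,Q_1]+\nu[P_1,R^{k'}]+[P_1,Q_1]=1.
\]
Lemma~\ref{pepe0}(1) gives $\ell_{\rho,\sigma}([R^k,Q_1])=k\psi(R)^{k-1}\ell_{\rho,\sigma}([R,Q_1])$ and symmetrically $\ell_{\rho,\sigma}([P_1,R^{k'}])=k'\psi(R)^{k'-1}\ell_{\rho,\sigma}([P_1,R])$. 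Matching the leading $(\rho,\sigma)$-parts of both sides, and using the B\'ezout relation from $\gcd(k,k')=1$ to combine the two contributions proportional to $\psi(R)^{k-1}$ and $\psi(R)^{k'-1}$, I would isolate a $(\rho,\sigma)$-homogeneous component of degree $\rho+\sigma$ from $Q_1$ (or $P_1$) which, after a suitable normalisation, supplies the $F$ satisfying $[R,F]_{\rho,\sigma}=\psi(R)$.

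The principal difficulty is this last step: the Poisson-bracket equation $\{\psi(R),\psi(F)\}=\psi(R)$ must be solvable inside the finite-dimensional space of $(\rho,\sigma)$-homogeneous elements of $A_1$ of degree $\rho+\sigma$. For directions with $\rho,\sigma\ge 2$ this space is one-dimensional, spanned by $XY$, so the equation forces $\psi(R)$ to be a monomial --- a strong constraint on the counterexample that must be extracted from the specific shape of $\ell_{\rho,\sigma}(P)$ by a careful Euler-identity argument for $(\rho,\sigma)$-homogeneous polynomials.
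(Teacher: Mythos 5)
Your plan for part~1 retraces a standard proof of the Dixmier--Joseph lemma: derive $[P,Q]_{\rho,\sigma}=0$, pass to vanishing of the Poisson bracket $\{\ell_{\rho,\sigma}(P),\ell_{\rho,\sigma}(Q)\}$, conclude that both leading forms are proportional to powers of a common $(\rho,\sigma)$-homogeneous element, and run a descent on $v_{\rho,\sigma}(Q)$ to upgrade $k\ge 1$ to $k\ge 2$. The outline is reasonable, but note that ``two polynomials with zero Jacobian are powers of a common element'' is false for general bivariate polynomials (only algebraic dependence follows); the conclusion you want genuinely uses $(\rho,\sigma)$-homogeneity. The paper does not re-derive any of this --- it simply cites \cite{HT}*{Corollary~2.6 and Theorem~3.13} for the factorization $\ell_{\rho,\sigma}(P)=\mu\psi(R)^k$ with $k\ge 2$.

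Part~2 of your proposal has a genuine gap, and it takes a much harder road than the paper does. The paper never touches the subleading terms of $[P,Q]=1$ at all. Instead, since $v_{\rho,\sigma}(P)>0$ by Proposition~\ref{v mayor que cero}, \cite{GGV1}*{Theorem~4.1} directly furnishes a $(\rho,\sigma)$-homogeneous $\tilde F$ with $v_{\rho,\sigma}(\tilde F)=\rho+\sigma$ and $[P,\tilde F]_{\rho,\sigma}=\ell_{\rho,\sigma}(P)$. Substituting $\ell_{\rho,\sigma}(P)=\mu\psi(R)^k=\ell_{\rho,\sigma}(\mu R^k)$, applying \cite{GGV1}*{Corollary~2.6} to replace $P$ by $\mu R^k$ in the bracket, and then invoking Lemma~\ref{pepe0} to write $[R^k,\tilde F]_{\rho,\sigma}=k\psi(R)^{k-1}[R,\tilde F]_{\rho,\sigma}$ yields $\mu k\psi(R)^{k-1}[R,\tilde F]_{\rho,\sigma}=\mu\psi(R)^k$; cancelling the non-zero-divisor $\psi(R)^{k-1}$ in $L$ gives $[R,F]_{\rho,\sigma}=\psi(R)$ with $F\coloneqq k\tilde F$. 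Your route, by contrast, expands $[P,Q]=1$ one order below the leading one and tries to combine the terms $k\psi(R)^{k-1}\ell_{\rho,\sigma}([R,Q_1])$ and $k'\psi(R)^{k'-1}\ell_{\rho,\sigma}([P_1,R])$ via a B\'ezout relation ``from $\gcd(k,k')=1$''. Nothing in the hypotheses guarantees $k$ and $k'$ are coprime, so that step is unjustified. Moreover you never actually isolate the degree-$(\rho+\sigma)$ element that would serve as $F$, and you explicitly flag the final solvability question as the ``principal difficulty'' without resolving it. The missing ingredient is precisely \cite{GGV1}*{Theorem~4.1}: once one has $\tilde F$ with $[P,\tilde F]_{\rho,\sigma}=\ell_{\rho,\sigma}(P)$, the whole construction of $F$ collapses to a one-line cancellation and never needs the exponent $k'$ of $Q$.
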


\begin{proof} By Proposition~\ref{v mayor que cero}, we have $v_{\rho,\sigma}(P)>0$, and so, by~\cite{GGV1}*{Theorem~4.1}, there exists $(\rho,\sigma)$-homogeneous element $\tilde F\in A_1\setminus\{0\}$, such that
\begin{equation}\label{pepe1}
v_{\rho,\sigma}(\tilde F)=\rho+\sigma\quad\text{and}\quad[P,\tilde F]_{\rho,\sigma}=\ell_{\rho,\sigma}(P).
\end{equation}
Moreover, by~\cite{HT}*{Corollary~2.6 and Theorem~3.13} there exists a $(\rho,\sigma)$-homogeneous element $R\in A_1\setminus\{0\}$ such that
\begin{equation}\label{pepe2}
\ell_{\rho,\sigma}(P)=\mu \psi(R)^k\qquad\text{for some $\mu\in K^{\times}$ and $k\ge 2$.}
\end{equation}
Hence $\ell_{\rho,\sigma}(P)= \mu \psi(R)^k = \mu \ell_{\rho,\sigma}(R)^k = \ell_{\rho,\sigma}(\mu R^k)$ (by~\cite{GGV1}*{Proposition~1.9(2)}), and consequently, by~\cite{GGV1}*{Corollary~2.6} and equalities~\eqref{pepe1} and~\eqref{pepe2},
$$
\mu[R^k,\tilde F]_{\rho,\sigma} = [\mu R^k,\tilde F]_{\rho,\sigma} = [P,\tilde F]_{\rho,\sigma} = \ell_{\rho,\sigma}(P) = \mu\psi(R)^k\ne 0.
$$
This, combined with Lemma~\ref{pepe0}(1), gives
$$
[R^k,\tilde F]_{\rho,\sigma} = \ell_{\rho,\sigma}([R^k,\tilde F]) = k\ell_{\rho,\sigma}(R)^{k-1}\ell_{\rho,\sigma}([R,\tilde F]).
$$
Moreover $[R^k,\tilde F]_{\rho,\sigma}\ne 0$ implies $[R,\tilde F]_{\rho,\sigma}\ne 0$ (by Lemma~\ref{pepe0}(2)). Hence
$$
\mu k \psi(R)^{k-1}[R,\tilde F]_{\rho,\sigma} = \mu k\ell_{\rho,\sigma}(R)^{k-1}\ell_{\rho,\sigma}([R,\tilde F]) = \mu [R^k,\tilde F]_{\rho,\sigma}=\mu \psi(R)^k,
$$
and so $k[ R,\tilde F]_{\rho,\sigma}=\psi(R)$, since $\psi( R)^{k-1}$ is not a zero divisor. Setting $F\coloneqq k \tilde F$ we obtain $[R,F]_{\rho,\sigma}= k [R,\tilde F]_{\rho,\sigma}=\psi(R)$, as desired.
\end{proof}

\section{Cases}

\begin{definition} For $A,B\in \mathds{Q}^2$ we write $A\sim B$, if $A=\lambda B$ for some $\lambda\in \mathds{Q}^{\times}$.
\end{definition}

\begin{remark} For $A,B\in \mathds{Q}^2$, we have
$$
A\times B=0 \Longleftrightarrow \left(A=0,\ B=0 \text{ or } A\sim B\right).
$$
In this case, we say that $A$ and $B$ are {\em aligned}. We write $A\nsim B$, if $A\times B\ne 0$. So
$$
A\nsim B \Longleftrightarrow \left(A\ne 0,\ B\ne 0\text{ and }A\ne \lambda B \text{ for all $\lambda\in \mathds{Q}$}\right).
$$
\end{remark}

\begin{remark}\label{casos soporte} Let $(P,Q)$ be a counterexample to the DC. By Proposition~\ref{v mayor que cero}, we have $v_{1,1}(P)>0$, and so, by~\cite{GGV1}*{Theorem~4.1}, there exists~a~$(1,1)$-ho\-mogeneous element $F\in A_1$ such that $[P,F]_{1,1}=\ell_{1,1}(P)$ and $v_{1,1}(F)=1+1=2$. But then
$$
\Supp(F)\subseteq \bigl\{(i,j)\in \mathds{N}_0^2: 2=v_{1,1}(i,j)=i+j\}=\{(2,0),(1,1),(0,2)\bigr\}.
$$
Set $z\coloneqq x^{-1}y$ and write $\ell_{1,1}(P)=x^i y^j f_{P}(z)$ and $\psi(F)=x^uy^v f_F(z)$. Since $\Supp(F)\subseteq \{(0,2),(1,1),(2,0)\}$, by~\cite{GGV1}*{Corollary~4.4(1)}, we have \hbox{$\# \factors(f_P)\le 2$}, where $\# \factors(f_P)$ denotes the number of different linear factors of $f_P$.

\begin{enumerate}[itemsep=0.7ex, topsep=1.0ex, label={\emph{\arabic*)}}]

\item If $\# \factors(f_P) = 0$, then $\ell_{1,1}(P)$ is a monomial with support $\{(i,j)\}$, and we have three possibilities,
\begin{itemize}
\item[a)] $i>0$, $j>0$,

\item[b)] $i=0$, $j>0$,

\item[c)] $i>0$, $j=0$.
\end{itemize}

\noindent\begin{tikzpicture}[scale=0.6]
\draw [thick,->] (0,0)--(6,0) node[anchor=north]{$x$};
\draw [thick,->] (0,0) --(0,4) node[anchor=east]{$y$};
\fill[gray] (1,2) -- (3,2) -- (2.5,1)--(2.3,1.2)--(2.1,1.1)--(2.1,1.5)--(1.5,1.5);
\draw[-] (1,2) -- (3,2) -- (2.5,1);
\draw[dotted] (0,0) -- (4,4);
\draw[dotted,red] (1,4) -- (5,0);
\fill[red] (3,2) circle (4pt);
\draw(3,-1)node{Case 1a)};
\draw[red](3.7,2.5)node{$\ell_{1,1}(P)$};
\end{tikzpicture}
\begin{tikzpicture}[scale=0.6]
\draw [thick,->] (0,0)--(6,0) node[anchor=north]{$x$};
\draw [thick,->] (0,0) --(0,4) node[anchor=east]{$y$};
\fill[gray] (0,1.5) -- (0,3) -- (0.75,1.5)--(0.6,1.4)--(0.2,1.7);
\draw[-] (0,1.5) -- (0,3) -- (0.75,1.5);
\draw[dotted] (0,0) -- (4,4);
\draw[dotted,red] (0,3) -- (2,1);
\fill[red] (0,3) circle (4pt);
\draw(3,-1)node{Case 1b)};
\draw[red](1.1,3.1)node{$\ell_{1,1}(P)$};
\end{tikzpicture}
\begin{tikzpicture}[scale=0.6]
\draw [thick,->] (0,0)--(6,0) node[anchor=north]{$x$};
\draw [thick,->] (0,0) --(0,4) node[anchor=east]{$y$};
\fill[gray] (2,1) -- (4,0) -- (2,0)--(1.8,0.3)--(2.2,0.7);
\draw[-] (2,1) -- (4,0) -- (2,0);
\draw[dotted] (0,0) -- (4,4);
\draw[dotted,red] (3,1) -- (4,0);
\fill[red] (4,0) circle (4pt);
\draw(3,-1)node{Case 1c)};
\draw[red](4.5,0.5)node{$\ell_{1,1}(P)$};
\end{tikzpicture}

\item If $\# \factors(f_P)=1$, then, by~\cite{GGV1}*{Corollary~4.4(1)}, $f_F$ has at least one linear factor, and so $\# \Supp(F)\ge 2$. We have the following possibilities:
\begin{itemize}

\item[a)] $\st_{1,1}(P)\sim (2,0)$ and $\en_{1,1}(P)\sim (0,2)$,

\item[b)] $\st_{1,1}(P)\sim (2,0)$ and $\en_{1,1}(P)\nsim (0,2)$,

\item[c)] $\st_{1,1}(P)\nsim (2,0)$ and $\en_{1,1}(P)\sim (0,2)$.
\end{itemize}
In fact, if~a) and~b) are not satisfied, then $\st_{1,1}(P)\nsim (2,0)$. Consequently, by~\cite{GGV1}*{Theorem~4.1(1)} necessarily $\st_{1,1}(F) = (1,1)$. Since $\# \Supp(F)\ge 2$, this implies that $\en_{1,1}(F) = (0,2)$, and so, $\en_{1,1}(P)\sim (0,2)$, by~\cite{GGV1}*{Theo\-rem~4.1(2)}.

\noindent \begin{tikzpicture}[scale=0.6]
\draw [thick,->] (0,0)--(6,0) node[anchor=north]{$x$};
\draw [thick,->] (0,0) --(0,5) node[anchor=east]{$y$};
\fill[gray] (0,0) -- (0,4) -- (4,0);
\draw[dotted] (0,0) -- (4,4);
\draw[red,thick] (0,4) -- (4,0);
\fill[red] (0,4) circle (4pt)
(4,0) circle (4pt);
\draw(3,-1)node{Case 2a) and Case 3)};
\draw[red](2,3.2)node{$\ell_{1,1}(P)$};
\end{tikzpicture}
\begin{tikzpicture}[scale=0.6]
\draw [thick,->] (0,0)--(6,0) node[anchor=north]{$x$};
\draw [thick,->] (0,0) --(0,5) node[anchor=east]{$y$};
\fill[gray] (0,0) -- (0,2)--(0.3,1.8)--(0.3,2.3)--(0.9,2.2)--(1,3) -- (4,0);
\draw[dotted] (0,0) -- (4,4);
\draw[red,thick] (1,3) -- (4,0);
\fill[red] (1,3) circle (4pt)
(4,0) circle (4pt);
\draw(3,-1)node{Case 2b)};
\draw[red](3.3,1.9)node{$\ell_{1,1}(P)$};
\end{tikzpicture}
\begin{tikzpicture}[scale=0.6]
\draw [thick,->] (0,0)--(6,0) node[anchor=north]{$x$};
\draw [thick,->] (0,0) --(0,5) node[anchor=east]{$y$};
\fill[gray] (0,0) -- (2,0)--(1.8,0.3)--(2.3,0.3)--(2.2,0.9)--(3,1) -- (0,4);
\draw[dotted] (0,0) -- (4,4);
\draw[red,thick] (0,4) -- (3,1);
\fill[red] (0,4) circle (4pt)
(3,1) circle (4pt);
\draw(3,-1)node{Case 2c)};
\draw[red](2,3.2)node{$\ell_{1,1}(P)$};
\end{tikzpicture}

\item If $\# \factors(f_P)=2$, then $f_F$ has also two different linear factors, and consequently $(0,2),(2,0)\in \Supp(F)$. Again by items~(1) and~(2) of~\cite{GGV1}*{The\-orem~4.1}, necessarily $\st_{1,1}(P)\sim (2,0)$ and $\en_{1,1}(P)\sim (0,2)$.
\end{enumerate}
\end{remark}

\section{Lower bound for $m(P)$}

Recall from~\cite{GGV1} that $P\in A_1\setminus \{0\}$ is {\em subrectangular with vertice $(a,b)\in \mathds{N}\times \mathds{N}$} if
$$
(a,b)\in \Supp(P) \subseteq \{(i,j):\text{ $0\le i\le a$ and $0\le j\le b$}\}.
$$

We will see in Theorem~\ref{central} that the following proposition covers all the cases~of~Re\-mark~\ref{casos soporte}.

\begin{proposition}\label{proposicion principal} Let $(P,Q)$ be a counterexample to the DC. We have:

\begin{enumerate}[itemsep=0.7ex, topsep=1.0ex, label={\emph{\arabic*)}}]

\item If $P$ is subrectangular and $v_{1,-1}(\en_{1,0}(P))<0$, then $m(P)\ge 5$.

\item If $\ell_{1,1}(P)=\lambda y^n$ and $v_{1,-1}(\en_{1,0}(P))<0$, then $m(P)\ge 5$.

\item If $\ell_{1,1}(P)=\lambda y^n$ and $v_{1,-1}(\en_{1,0}(P))>0$, then $m(P)\ge 10$.

\item If $\# \factors(f_P)\!=\!1$, $\st_{1,1}(P)\!\sim\!(2,0)$ and $\en_{1,1}(P)\!\sim\! (0,2)$, then \hbox{$m(P)\!>\!16$}.

\item If $\# \factors(f_P)\!=\!1$, $v_{1,-1}(\en_{1,0}(P))\!<\!0$, $\en_{1,0}(P)\!=\!\st_{1,1}(P)\!\nsim\! (2,0)$ and $\en_{1,1}(P)\sim (0,2)$, then $m(P)\ge 5$.

\item If $\# \factors(f_P)=1$, $v_{1,-1}(\en_{1,0}(P))>0$, $\en_{1,0}(P)=\st_{1,1}(P)\nsim (2,0)$ and $\en_{1,1}(P)\sim (0,2)$, then $m(P)\ge 10$.

\item If $\# \factors(f_P)\!=\!2$, $\st_{1,1}(P)\!\sim\! (2,0)$ and $\en_{1,1}(P)\!\sim\! (0,2)$, then \hbox{$m(P)\!\ge\! 5$}.
\end{enumerate}
\end{proposition}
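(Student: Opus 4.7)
\smallskip

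The seven cases all follow a common template which I outline first. Pick a direction $(\rho,\sigma)\in\mathfrak{V}_{>0}$ adapted to the case hypothesis---typically the primitive outward normal to the specific edge of $\HH(P)$ dictated by the given information on $\st_{1,1}(P)$, $\en_{1,1}(P)$, $\en_{1,0}(P)$, or the subrectangularity. By Proposition~\ref{v mayor que cero} we have $v_{\rho,\sigma}(P)>0$, so Proposition~\ref{F existe} applies and gives $\ell_{\rho,\sigma}(P)=\mu\,\psi(R)^k$ for some $\mu\in K^\times$, some $k\ge 2$, and some $(\rho,\sigma)$-homogeneous $R\in A_1\setminus\{0\}$. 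Writing $\psi(R)=x^uy^v f_R(z)$ as in~\eqref{relacion entre ell y f}, this translates to $f_{P,\rho,\sigma}=\mu'(f_R)^k$ up to scalar; combined with Remark~\ref{soporte da cota}, we obtain
\begin{equation*}
m(P)\ge t(f_P)=t(f_R^k).
\end{equation*}
So the whole problem reduces, in each case, to bounding $t(f_R^k)$ from below.

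\smallskip

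For the bound $m(P)\ge 5$ (cases~1, 2, 5, 7) the argument is: first establish $t(f_R)\ge 3$; then invoke Proposition~\ref{potencia de tres}(1) to get $t(f_R^k)\ge 4$; finally rule out the exceptional case $k=2$, $f_R\simeq 1+x-\tfrac12 x^2$ of Proposition~\ref{potencia de tres}(2) to upgrade to $5$. To show $t(f_R)\ge 3$, I argue by contradiction: if $t(f_R)\le 2$, Proposition~\ref{factores lineales distintos} forces $f_R$ (and hence $f_P=\mu'(f_R)^k$) to have $\deg(f_R)$ distinct linear factors, which I then show is incompatible with the given data on $\#\factors(f_P)$ or on the edge shape (for case~1, the subrectangularity and $v_{1,-1}(\en_{1,0}(P))<0$ pin down the support of $\ell_{\rho,\sigma}(P)$ in a way that is inconsistent with a pure $(1+x)^k$-type polynomial). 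The exceptional case is ruled out by the same mechanism: there Proposition~\ref{factores lineales distintos} gives $2$ distinct linear factors for $f_R$, hence for $f_P$, which must then be checked against the case hypothesis.

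\smallskip

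For the stronger bounds $m(P)\ge 10$ in cases~3 and~6, and $m(P)>16$ in case~4, the extra hypotheses (sign of $v_{1,-1}(\en_{1,0}(P))$, or simultaneous alignment $\st_{1,1}(P)\sim(2,0)$ and $\en_{1,1}(P)\sim(0,2)$) force a larger $n$ in~\eqref{calculo de st y en}, or a larger $k$, so that the lower bound on $t(f_R^k)$ produced by the previous argument becomes correspondingly larger. In cases~3 and~6, the positivity $v_{1,-1}(\en_{1,0}(P))>0$ moves the right endpoint of the relevant edge far enough that the $k$-th-power structure forces many intermediate terms to appear; in case~4, the alignment conditions show that $\ell_{1,1}(P)$ is a $k$-th power along a chord from $(2d,0)$ to $(0,2d)$ with $\#\factors(f_P)=1$, and combining this with the upgrade argument above yields the count $>16$.

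\smallskip

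The main obstacle will be case~4, where the bound $m(P)>16$ is quite sharp and requires precise control of both $k$ and $\deg(f_R)$; a secondary difficulty, appearing throughout, is the case-by-case verification that the chosen direction $(\rho,\sigma)$ really corresponds to an edge of $\HH(P)$ along which Proposition~\ref{F existe} applies, and the careful use of Proposition~\ref{factores lineales distintos} to translate between the exceptional polynomials and factorisation data.
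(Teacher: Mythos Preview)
Your template is sound for cases~1, 2, 5, and~7, and roughly matches the paper there; but you have the difficulty inverted for cases~3, 4, and~6, and this hides a genuine gap.

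Cases~4 and~6 are the \emph{easiest}, not the hardest. In both, the hypothesis $\#\factors(f_{P,1,1})=1$ already forces $f_P=\lambda(z-\mu)^m$ with $m=\deg f_P$, so $t(f_P)=m+1$ directly; the geometric constraints plus $\deg(P)\ge 16$ (from~\cite{GGV1}*{Corollary~7.4}) give $m\ge 16$ in case~4 and $m>8$ in case~6. No appeal to Proposition~\ref{F existe} or to the $R$-decomposition is needed.

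Case~3 is the genuinely hard one, and your sketch does not reach it. When $\ell_{1,1}(P)=\lambda y^n$ and $v_{1,-1}(\en_{1,0}(P))>0$, there is no obvious single edge whose $f_P$ has many terms. The paper instead applies $\tau$ to move the monomial to $x^n$, takes the successor direction $(1,\sigma_0)=\Succ_{P_0}(1,1)$, and then \emph{iteratively} applies triangular automorphisms $Y\mapsto Y+\mu_i X^{\sigma_i}$ to kill successive edges until the predecessor of $(0,1)$ drops to $\le(1,1)$. This process preserves $\st_{0,1}$, so at the end one can invoke $\deg\ge 16$ on the transformed element, and combine the two inequalities $k_0>n_0-k_0\sigma_0$ and $n_0-k_0\sigma_0+k_0\ge 16$ to get $k_0\ge 9$, hence $m(P)\ge k_0+1\ge 10$. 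Your ``$k$-th-power structure forces many intermediate terms'' does not substitute for this argument.

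For cases~1, 2, 5 (and in a different form~7), your template is correct but the mechanism you propose for establishing $t(f_R)\ge 3$ and excluding $f_R\simeq 1+x-\tfrac12 x^2$ is too vague. The paper's key point is that, by~\cite{GGV1}*{Corollary~4.4}, every linear factor of $f_R$ divides $f_F$; hence if $f_R$ were separable (which by Proposition~\ref{factores lineales distintos} covers both $t(f_R)\le 2$ and the exceptional case) then $f_R\mid f_F$. One then rules this out by proving $\deg(f_R)>\deg(f_F)$, which in turn requires first locating the right direction $(\rho,\sigma)\in\ ](1,-1),(1,0)]$ along which the edge crosses the diagonal $i=j$, and then a careful geometric comparison of $\en_{\rho,\sigma}(R)$ and $\en_{\rho,\sigma}(F)$. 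Your appeal to ``incompatibility with $\#\factors(f_P)$'' cannot work in cases~1 and~2, where no such hypothesis is given.
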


\begin{proof} Along the proof we will use several times~\cite{GGV1}*{Theorem~4.1}, which applies by Proposition~\ref{v mayor que cero}. We will prove first the easier cases. Note that, by~\cite{GGV1}*{Corollary~7.4}, we know that $\deg(P)=v_{1,1}(P)\ge 16$; while, by~\cite{GGV1}*{Theorem~4.1}, we have $v_{1,-1}(\en_{1,0}(P))\ne 0$. Moreover if $\# \factors(p)=1$, then there exists $i,j\in \mathds{N}_0$, $k\in \mathds{N}$ and $\lambda,\mu\in K^{\times}$ such that $\ell_{1,1}(P) = x^i y^j \lambda (z-\mu)^k$, where $z\coloneqq x^{-1}y$.

\smallskip

\noindent\textsc{Case 4)}: By~\eqref{calculo de st y en}, in this case
$$
(i,j) = \st_{1,1}(P)\sim (2,0) \quad\text{and}\quad  (i,j)+k(-1,1) = \en_{1,1}(P) \sim (0,2).
$$
Hence $j=0$ and $i=k$. Consequently $\ell_{1,1}(P)= \lambda(y-\mu x)^k$, which implies that $k=\deg(P)\ge 16$. So, by Remark~\ref{soporte da cota}, we have $m(P)\ge t(\lambda(z-\mu)^k) = k+1>16$.

\smallskip

\noindent\textsc{Case 6)}: By~\eqref{calculo de st y en}, in this case
$$
(i,j) = \st_{1,1}(P) = \en_{1,0}(P) \quad\text{and}\quad  (i,j)+k(-1,1) = \en_{1,1}(P) \sim (0,2).
$$
Hence $i-j = v_{1,-1}(\en_{1,0}(P))>0$ and $i=k$. So $j<k$ and $\ell_{1,1}(P)= \lambda y^j (y-\mu x)^k$. Thus $k>\deg(P)/2\ge 8$, and consequently, $m(P)\ge t(\lambda(z-\mu)^k) = k+1>9$.

\smallskip

\noindent\textsc{Case 3)}: Let $\tau\colon A_1\to A_1$ be the morphism given by $\tau(X)\coloneqq Y$ and $\tau(Y)\coloneqq -X$. Set $P_0\coloneqq \tau(P)$. Clearly $(P_0,\tau(Q))$ is a counterexample to DC, $m(P_0)=m(P)$, $\ell_{1,1}(P_0)=(-1)^n\lambda x^n$ and $v_{1,-1}(\st_{0,1}(P_0))<0$. Define now $(\rho_0,\sigma_0)\coloneqq \Succ_{P_0}(1,1)$. Then, $(1,1)<(\rho_0,\sigma_0)<(0,1)$, since otherwise $P_0\in K[X]$. Hence, $\sigma_0>\rho_0>0$, and so, by~\cite{GGV1}*{Lemma~6.4}, we have $\rho_0=1$, $\st_{1,\sigma_0}(P_0) = (n_0,0)$ where $n_0\coloneqq v_{1,1}(P_0)$, and
$$
\ell_{1,\sigma_0}(P_0)\! =\! x^{n_0}f_{P_0,1,\sigma_0}(x^{-\sigma_0}y)\! =\! x^{n_0} \lambda_0(x^{-\sigma_0}y-\mu_0)^{k_0}\! =\! x^{n_0-\sigma_0 k_0} \lambda_0(y-\mu_0 x^{\sigma_0})^{k_0},
$$
where $\lambda_0,\mu_0\in K^{\times}$ and $k_0\in \mathds{N}$. Note that $m(P) = m(P_0)\ge t(f_{P_0,1,\sigma_0})=k_0+1$ (by Remark~\ref{soporte da cota}). So, in order to finish the proof it suffices to show that $k_0\ge 9$. For this we will prove that
\begin{equation}\label{pepe4}
k_0>n_0-k_0\sigma_0 \qquad\text{and}\qquad n_0-k_0\sigma_0+k_0\ge 16.
\end{equation}
By~\eqref{calculo de st y en}, we have
\begin{equation*}
\st_{1,\sigma_0}(P_0) = (n_0,0) \qquad\text{and}\qquad \en_{1,\sigma_0}(P_0) = (n_0,0)+k_0(-\sigma_0,1).
\end{equation*}
We also have $\en_{1,\sigma_0}(P_0)=\st_{0,1}(P_0)$. In fact, otherwise there exists $(\tilde\rho,\tilde \sigma)\in \Dir(P_0)$ such that
$$
(1,1)<(1,\sigma_0)<(\tilde\rho,\tilde\sigma)<(0,1)\quad\text{and}\quad \st_{\tilde\rho,\tilde \sigma}(P_0)=\en_{1,\sigma_0}(P_0)=(n_0-k_0\sigma_0,k_0).
$$
Hence $\tilde \sigma > \tilde \rho>0$, and consequently by~\cite{GGV1}*{Lemma~6.4(1)}, we have $k_0=0$. But we know that $k_0>0$, and so, $\en_{1,\sigma_0}(P_0)=\st_{0,1}(P_0)$, as we want. Since
$$
v_{1,-1}(n_0-k_0\sigma_0,k_0) = v_{1,-1}(\en_{1,\sigma_0}(P_0))=v_{1,-1}(\st_{0,1}(P_0))<0,
$$
we conclude that the first inequality in~\eqref{pepe4} holds. In order to prove the second inequality, we define the morphism $\varphi_0\colon A_1\to A_1$ by
$$
\varphi_0(X)\coloneqq X\quad\text{and}\quad \varphi_0(Y)\coloneqq Y+\mu_0 X^{\sigma_0}.
$$
Then $(P_1,Q_1)\coloneqq (\varphi_0(P_0),\varphi_0(Q_0))$ is also a counterexample to DC. It is easy to check that $\ell_{0,1}(P_1) = \ell_{0,1}(P_0)$, and so
\begin{equation}\label{eq1}
\st_{0,1}(P_1) = \st_{0,1}(P_0) = (n_0-k_0\sigma_0,k_0).
\end{equation}
Let $\varphi_L\colon L\to L$ be the morphism defined by $\varphi_L(x) \coloneqq x$ and $\varphi_L(y)\coloneqq y+\mu_0 x^{\sigma_0}$. By~\cite{GGV1}*{Proposition~5.1}, we have
\begin{align*}
& \ell_{\rho,\sigma}(P_1) = \ell_{\rho,\sigma}(P_0)\quad\text{for $(\rho,\sigma)$ such that $(\rho_0,\sigma_0)<(\rho,\sigma)\le (0,1)$},\\
\shortintertext{and}
&\ell_{1,\sigma_0}(P_1) = \varphi_L(\ell_{1,\sigma_0}(P_0)) = \lambda_0x^{n_0-k_0\sigma_0}y^{k_0}.
\end{align*}
So, if we write $(\rho_1,\sigma_1)\coloneqq \Pred_{P_1}(0,1)$, then $(\rho_1,\sigma_1)<(\rho_0,\sigma_0)$. If $(\rho_1,\sigma_1)\le (1,1)$, then
$$
\en_{1,1}(P_1) = \st_{0,1}(P_1) = (n_0-k_0\sigma_0,k_0),
$$
and so $n_0-k_0\sigma_0+k_0 = v_{1,1}(P_1)\ge 16$, by~\cite{GGV1}*{Corollary~7.4}. On the other hand, if $(1,1)<(\rho_1,\sigma_1)<(0,1)$, then $\sigma_1>\rho_1>0$ and consequently, by~\cite{GGV1}*{Lemma~6.4}, we have $\rho_1=1$, $\st_{1,\sigma_1}(P_1) = (n_1,0)$ where $n_1\coloneqq v_{1,1}(P_1)$, and
$$
\ell_{1,\sigma_1}(P_1)\! =\! x^{n_1}f_{P_1,1,\sigma_1}(x^{-\sigma_1}y)\! =\! x^{n_1} \lambda_1(x^{-\sigma_1}y-\mu_1)^{k_1}\! =\! x^{n_1-\sigma_1 k_1} \lambda_1(y-\mu_1 x^{\sigma_1})^{k_1},
$$
where $\lambda_1,\mu_1\in K^{\times}$ and $k_1\in \mathds{N}$. We now define the morphism $\varphi_1\colon A_1\to A_1$ by
$$
\varphi_1(X)\coloneqq X\quad\text{and}\quad \varphi_1(Y)\coloneqq Y+\mu_1 X^{\sigma_1}.
$$
Then $(P_2,Q_2)\coloneqq (\varphi_1(P_1),\varphi_1(Q_1))$ is also a counterexample to DC. It is easy to check that $\ell_{0,1}(P_2) = \ell_{0,1}(P_1)$, and so
\begin{equation}\label{eq2}
\st_{0,1}(P_2) = \st_{0,1}(P_1) = (n_0-k_0\sigma_0,k_0),
\end{equation}
by~\eqref{eq1}. Write $(\rho_2,\sigma_2)\coloneqq \Pred_{P_2}(0,1)$. Arguing as above we obtain that
$$
(\rho_2,\sigma_2)\le (1,1)\quad\text{or}\quad (\rho_2,\sigma_2) = (1,\sigma_2) < (1,\sigma_1) = (\rho_1,\sigma_1).
$$
In the first case
$$
\en_{1,1}(P_2) = \st_{0,1}(P_2) = (n_0-k_0\sigma_0,k_0),
$$
and so $n_0-k_0\sigma_0+k_0 = v_{1,1}(P_1)\ge 16$, by~\cite{GGV1}*{Corollary~7.4}. In the second case we continue this construction and obtain $(1,\sigma_0)>(1,\sigma_1)>(1,\sigma_2)>\dots$, until for some $m$ we have $\Pred_{P_m}(1,1)= (\rho_m,\sigma_m)\le (1,1)$. Since
 $\sigma_0>\sigma_1>\sigma_2>\dots $ can have only finite length, this necessarily happens. But then, by~\cite{GGV1}*{Corollary~7.4}, we have $n_0-k_0\sigma_0+k_0 =\deg(P_m)\ge 16$, as we want.
$$
\begin{tikzpicture}[scale=0.4]
\fill[blue!50] (0,0) -- (8,0) -- (3,5)--(0,5);
\fill[blue!25] (8,0) -- (3,5) -- (18,0);
\fill[blue!12] (18,0) -- (3,5) -- (23,0);
\fill[blue!5] (23,0) -- (3,5) -- (28,0);
\draw[step=1cm,gray,very thin] (-0.3,-0.3) grid (29.3,6.3);
\draw [->] (0,0)--(30.5,0) node[anchor=south]{$x$};
\draw [->] (0,-0.3) --(0,7.5) node[anchor=east]{$y$};
\fill[red] (3,5) circle (3pt)
(8,0) circle (3pt)
(18,0) circle (3pt)
(23,0) circle (3pt)
(28,0) circle (3pt);
\draw[thick,->](6,2)--(7,3);
\draw[thick,->](27,0.2)--(28,5.2);
\draw[thick,->](21,0.5)--(22,4.5);
\draw[thick,->](15,1)--(16,4);
\draw[-](3,5)--(8,0);
\draw[-](3,5)--(18,0);
\draw[-](3,5)--(23,0);
\draw[-](3,5)--(28,0);
\draw(0,6) node[fill=white, above=0pt, right=0pt]{$(n_0-k_0\sigma_0,k_0)$};
\draw(28,3) node[fill=white, above=0pt, right=0pt]{$(\rho_0,\sigma_0)$};
\draw(21.8,3) node[fill=white, above=0pt, right=0pt]{$(\rho_1,\sigma_1)$};
\draw(16.4,3.5) node[fill=white, above=0pt, right=0pt]{$(\rho_2,\sigma_2)$};
\draw(7.5,1.4) node[fill=white, above=0pt, right=0pt]{$(\rho_m,\sigma_m)$};
\end{tikzpicture}
$$

\smallskip

\noindent\textsc{Case 7)}: By Proposition~\ref{F existe}, there exist $\mu\in K\setminus\{0\}$, $k\ge 2$ and $(1,1)$-ho\-mo\-ge\-neous elements $R,F\in A_1\setminus\{0\}$, such that
\begin{equation}\label{eq3}
\ell_{1,1}(P)=\mu \psi(R)^k = \mu\ell_{1,1}(R)^k,\quad v_{1,1}(F)=2\quad\text{and}\quad [R,F]_{1,1}=\psi(R).
\end{equation}
Moreover by hypothesis, there exists $r\in \mathds{N}$, such that $(r,0),(0,r)\in \Supp(R)$.~Then, by~\eqref{eq3} and~\cite{GGV1}*{Proposition~1.9(4)}, we have $\st_{1,1}(P) = k\st_{1,1}(R) = (kr,0)$. We claim that $t(f_R)\ge 3$. Write $F=a X^2+b XY+c Y^2$. Assume by contradiction that $t(f_R)=\#\Supp(R)=2$, and so $R=\lambda_0 X^r+\lambda_1 Y^r$, for some \hbox{$\lambda_0,\lambda_1\in K^{\times}$}. Comparing the coefficients of $x^r$ and $y^r$ in the equality $[R,F]_{1,1}=\psi(R)$, we obtain that $-br\lambda_0 = \lambda_0$ and $br\lambda_1 = \lambda_1$, which is impossible. Consequently, by Proposition~\ref{potencia de tres}, we have either
$$
 t(f_R^k)\ge 5\qquad\text{or}\qquad f_R\simeq 1+x-\frac 12 x^2.
$$
We claim that $f_R\simeq 1+x-\frac 12 x^2$ is impossible. By Remark~\ref{soporte da cota}, we have
\begin{equation}\label{eq4}
x^{kr}f_P(z)=\ell_{1,1}(P)=\mu \ell_{1,1}(R)^k=\mu x^{rk}f_R^k(z),\quad\text{where $z\coloneqq x^{-1}y$.}
\end{equation}
Hence, $f_R$ has exactly two different linear factors (the same as $f_P$), and so, by Proposition~\ref{factores lineales distintos}, necessarily $r = \deg(f_R) =2$. Since, by~\cite{GGV1}*{Corollary~4.4(1)}, the polynomials $f_F$ and $f_R$ have the same linear factors, $f_R=\lambda f_F$, for~some $\lambda\in K^{\times}$, which implies $R=\lambda F$ (because $\st(R) = (2,0) = \st(F)$). But this is im\-pos\-sible, since $[\lambda F,F]_{1,1}=0\ne \psi(R)$.~Con\-sequently $t(f_R^k)\ge 5$, and so
$$
m(P) \ge t(f_P) = t(f_R^k)\ge 5,
$$
where the first inequality holds by~\eqref{m contra t} and the equality follows from~\eqref{eq4}.

\smallskip

\noindent\textsc{Cases 1), 2) and 5)}: In these three cases we have $v_{1,-1}(\en_{1,0}(P))<0$. We claim that there is a direction $(\rho,\sigma)$, such that
\begin{equation}\label{cond (rho,sigma)}
\!(1,-1)<(\rho,\sigma)\le (1,0),\!\quad v_{1,-1}(\st_{\rho,\sigma}(P))>0\!\quad \text{and}\!\quad v_{1,-1}(\en_{\rho,\sigma}(P))<0.
\end{equation}
In order to prove the claim, we first note that, $\Dir(P)\cap ](1,-1),(1,0)]\ne \emptyset$, since otherwise $\en_{1,0}(P)=\st_{1,0}(P)=\en_{1,-1}(P)$, and so
$$
v_{1,-1}(P)=v_{1,-1}(\en_{1,-1}(P))=v_{1,-1}(\en_{1,0}(P))<0,
$$
which contradicts Proposition~\ref{no estan en D mayor o igual a cero}. Hence, we can write
$$
\Dir(P)\cap ](1,-1),(1,0)]=\{(\rho_1,\sigma_1)<(\rho_2,\sigma_2)<\dots <(\rho_k,\sigma_k)\},
$$
Note that $v_{1,-1}(\st_{\rho_j,\sigma_j}(P))>v_{1,-1}(\en_{\rho_j,\sigma_j}(P))$ for each $1\le j\le k$. Since
$$
v_{1,-1}(\st_{\rho_1,\sigma_1}(P)) = v_{1,-1}(\en_{1,-1}(P)) = v_{1,-1}(P) >0
$$
and, by hypothesis,
$$
v_{1,-1}(\en_{\rho_k,\sigma_k}(P)) = v_{1,-1}(\en_{1,0}(P)) <0,
$$
there exists $j_0$ such that
$$
v_{1,-1}(\st_{\rho_{j_0},\sigma_{j_0}}(P))>0\quad\text{and}\quad v_{1,-1}(\en_{\rho_{j_0},\sigma_{j_0}}(P))\le 0.
$$
But the condition $v_{1,-1}(\en_{\rho_{j_0},\sigma_{j_0}}(P))= 0$ leads to $\en_{\rho_{j_0},\sigma_{j_0}}(P)\sim (1,1)$, which is impossible by \cite{GGV1}*{Theorem~4.1(3)}. Setting $(\rho,\sigma)\coloneqq (\rho_{j_0},\sigma_{j_0})$, this proves~\eqref{cond (rho,sigma)}. By Proposition~\ref{F existe}, there is $\mu\in K^{\times}$, $k\ge 2$ and $(\rho,\sigma)$-homogeneous elements $R$ and $F$, such that
\begin{equation}\label{cond 1}
\!\ell_{\rho,\sigma}(P)=\mu \psi(R)^k= \mu\ell_{\rho,\sigma}(R)^k,\!\quad [R,F]_{\rho,\sigma}=\psi(R)\!\quad\text{and}\!\quad v_{\rho,\sigma}(F)=\rho+\sigma.
\end{equation}
Consequently
\begin{equation}\label{cond 2}
\st_{\rho,\sigma}(P) = k\st_{\rho,\sigma}(R)\quad\text{and}\quad \en_{\rho,\sigma}(P) = k\en_{\rho,\sigma}(R).
\end{equation}
Let $f_P=f_{P,\rho,\sigma}$, $f_R= f_{R,\rho,\sigma}$ and $f_F = f_{F,\rho,\sigma}$ be as in Remark~\ref{soporte da cota}. Similarly as in~\eqref{eq4}, we have
\begin{equation}\label{eq7}
f_P(z) = \mu f_R^k(z)\quad\text{where $z\coloneqq x^{-\sigma/\rho}y$.}
\end{equation}
We assert that
\begin{equation}\label{R cumple condiciones}
\#\Supp(\ell_{\rho,\sigma}(R))\ge 3\quad\text{and}\quad f_R\not\simeq 1+x-\frac 12 x^{2}.
\end{equation}
In order to prove the assertion, note that by~\cite{GGV1}*{Corollary~4.4} every linear factor~of~$f_R$, which is necessarily a linear factor of $f_P$, divides $f_F$. Hence, by Proposition~\ref{factores lineales distintos}, if~\eqref{R cumple condiciones} is false, then $f_R$ is separable and so $f_R\mid f_F$. Consequently, to prove~\eqref{R cumple condiciones} it will be sufficient to verify that $\deg(f_R)>\deg(f_F)$.

By~\eqref{cond (rho,sigma)} we know that $\sigma\le 0$. We divide the proof in two cases:

\smallskip

\noindent\textsc{Case $\sigma=0$:}\enspace In this case $(\rho,\sigma)=(1,0)$. Since $v_{1,0}(\st_{1,0}(F)) = v_{1,0}(F) = 1$, we have $\st_{1,0}(F)=(1,l)$ with $l\in \mathds{N}_0$. Hence, by equalities~\eqref{calculo de st y en} and~\eqref{relacion entre ell y f}, we have $\psi(F)= xy^l f_F(y)$ and $\en_{1,0}(F)=(1,l+\deg(f_F))$. By~\eqref{cond (rho,sigma)} and~\eqref{cond 2}, we know that $v_{1,-1}(\st_{10}(R))>0$ and $v_{1,-1}(\en_{10}(R))<0$, which implies that $\deg(f_R)\ge 2$. So, if $\deg(f_F)\le 1$, then $\deg(f_R)>\deg(f_F)$ and condition~\eqref{R cumple condiciones} is satisfied. Consequently, we can assume that $\en_{1,0}(F)=(1,r)$, for some $r\coloneqq l+\deg(f_F)\ge 2$. By~\eqref{cond 2} and~\cite{GGV1}*{Theorem~4.1(2)} (which applies by Proposition~\ref{v mayor que cero}), we have $\en_{1,0}(F)\sim \en_{1,0}(P)\sim \en_{1,0}(R)$. Hence $\en_{1,0}(R)=(i,ir)$, for some $i\in \mathds{N}$. By~\eqref{calculo de st y en}, we have
\begin{equation}\label{para calcular deg fR}
\st_{1,0}(R)=(i,ir)-\deg(f_R)(0,1)=(i,j).
\end{equation}
So, there exists $j\in \mathds{N}_0$, such that $(i,j)=\st_{1,0}(R)$, and so $\psi(R) = x^iy^jf_R(y)$. Note that $j<i$, since $v_{1,-1}(\st_{1,0}(R))>0$. Moreover $R = X^iY^jf_R(Y)$.

We claim that if $i=1$, then $f_R(y)$ has no linear factor $y-\lambda$ with $\lambda\ne 0$ and multiplicity one. In fact, otherwise, we write $f_R(y) = (y-\lambda)\bar{f}(y)$, with $\bar{f}(\lambda)\ne 0$, and we define $\varphi\colon A_1\to A_1$ and $\varphi_L\colon L\to L$ by
$$
\varphi(X)\coloneqq X,\quad \varphi(Y)\coloneqq Y+\lambda,\quad \varphi_L(x)\coloneqq x\quad\text{and}\quad \varphi_L(y)\coloneqq y+\lambda.
$$
By \cite{GGV1}*{Proposition~5.1}, we have
$$
\ell_{1,0}(\varphi(R)) = \varphi_L(\ell_{1,0}(R)) = x(y+\lambda)^jy\bar{f}(y+\lambda),
$$
which implies that $(1,1)\in \Supp(\ell_{1,0}(\varphi(R)))$ and $(1,0)\notin \Supp(\ell_{1,0}(\varphi(R)))$. Conse\-quently $\st_{1,0}(\varphi(R))=(1,1)$. Since, by~\cite{GGV1}*{Proposition~5.1} and~\eqref{cond 1},
$$
\ell_{1,0}(\varphi(P))= \varphi_L(\ell_{1,0}(P)) = \mu \varphi_L(\ell_{1,0}(R))^k = \mu \ell_{1,0}(\varphi(R))^k,
$$
we have $\st_{1,0}(\varphi(P))\sim (1,1)$, which is impossible by~\cite{GGV1}*{Theorem~4.1(3)} (this~the\-orem applies because $[\varphi(P),\varphi(Q)]=1$ and $v_{1,0}(\varphi(P)) = v_{1,0}(\varphi(P))>0$, by~\cite{GGV1}*{Propo\-sition~5.1} and Proposition~1.4). Hence, the claim is true, and by~Proposi\-tion~\ref{factores lineales distintos}, we obtain that~\eqref{R cumple condiciones} holds when~$i=1$.

Assume now that $i\ge 2$. Since $j<i$ and $r\ge 2$, we have
$$
\deg(f_R)=ir-j>ir-i=r-1+(i-1)(r-1)\ge r\ge \deg(f_F),
$$
where the first equality follows from~\eqref{para calcular deg fR}. As we saw above, this inequality suf\-fices to conclude that~\eqref{R cumple condiciones} holds.

\smallskip

\textsc{Case $\sigma<0$}: Since $F$ is $(\rho,\sigma)$-homogeneous, $v_{\rho,\sigma}(F) = \rho+\sigma$ and $\rho>1$ (because $\rho>-\sigma>0$), we know that
$$
\Supp(F)\subseteq (1,1)+\mathds{N}_0(-\sigma,\rho),
$$
and so $v_{1,-1}(\st_{\rho,\sigma}(F))\le 0$. Since $v_{1,-1}(\st_{\rho,\sigma}(P))>0$ (by condition~\eqref{cond (rho,sigma)}), we have $\st_{\rho,\sigma}(P)\nsim \st_{\rho,\sigma}(F)$ and so, by~\cite{GGV1}*{Theorem~4.1(1)}, we have $\st_{\rho,\sigma}(F)=(1,1)$. Since $(\rho,\sigma)\in \Dir(P)$, we have $\# \factors(f_P)\ge1$, which, by~\cite{GGV1}*{Corollary~4.4(1)}, implies that $f_F$ has at least one linear factor. Hence $F$ is not a monomial, and so
\begin{equation}\label{eq5}
\en_{\rho,\sigma}(F)=(1,1)+s(-\sigma,\rho)\quad\text{for some $s>0$.}
\end{equation}
Then, by~\eqref{cond 2} and~\cite{GGV1}*{Theorem~4.1(2)}, we have $\en_{\rho,\sigma}(F)\sim \en_{\rho,\sigma}(P)\sim \en_{\rho,\sigma}(R)$. Since $v_{\rho,\sigma}(F)>0$ and $v_{\rho,\sigma}(R) = \frac{1}{k} v_{\rho,\sigma}(P)>0$, we have
\begin{equation}\label{eq6}
\en_{\rho,\sigma}(F)=\nu \en_{\rho,\sigma}(R),\quad\text{for some $\nu>0$.}
\end{equation}
Let $(i,j)\coloneqq \st_{\rho,\sigma}(R)$. By~\eqref{cond (rho,sigma)} and~\eqref{cond 2}, we have \hbox{$i-j = \frac{1}{k} v_{1,-1}(\st_{\rho,\sigma}(R))>0$}. Since, moreover, $\sigma<0$ and $\rho+\sigma>0$,
$$
v_{\rho,\sigma}(R)=v_{\rho,\sigma}(i,j)=\rho i+\sigma j> i(\rho+\sigma)\ge \rho+\sigma=v_{\rho,\sigma}(F) = \nu v_{\rho,\sigma}(R).
$$
Since $v_{\rho,\sigma}(R)>0$, this implies that $0<\nu<1$. Write $\en_{\rho,\sigma}(R)=(i,j)+r(-\sigma,\rho)$. We claim that $r>s$. In fact, by~\eqref{eq5} and~\eqref{eq6}, we have
$$
\nu(j-i+r(\rho+\sigma))=\nu v_{-1,1}(\en_{\rho,\sigma}(R))= v_{-1,1}(\en_{\rho,\sigma}(F))= s(\rho+\sigma).
$$
Since $0<\nu<1$ and $i>j$, this implies that
$$
r(\rho+\sigma)>\nu r(\rho+\sigma)>\nu(j-i+r(\rho+\sigma))= s(\rho+\sigma),
$$
and so $r>s$, as desired. Consequently, $\deg(f_R)>\deg(f_F)$, because $\deg(f_R)=r\rho$ and $\deg(f_F)=s\rho$ (by the first equality in~\eqref{relacion entre ell y f}). As we saw above, this inequality suf\-fices to conclude that~\eqref{R cumple condiciones} holds. Thus, we have proved~\eqref{R cumple condiciones} in all cases.

By~\eqref{m contra t} and the inequality in~\eqref{R cumple condiciones}, we have $t(f_R)\ge 3$. By~\eqref{eq7} we know~that there exists $k\ge 2$ and $\mu\in \mathds{Q}^{\times}$ such that $f_P = \mu f_R^k$. Hence, by Proposition~\ref{potencia de tres} and the second condition in~\eqref{R cumple condiciones}, we have $t(f_P) = t(f_R^k)\ge 5$. Finally, by~\eqref{m contra t}~we~con\-clude that $m(P)\ge t(f_P)\ge 5$.
\end{proof}

\begin{theorem} \label{central} If $(P,Q)$ is a counterexample to the DC, then $m(P)\ge 5$.
\end{theorem}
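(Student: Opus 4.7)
The plan is to invoke Proposition~\ref{proposicion principal} and verify that, possibly after applying the mass-preserving symmetry $\tau\colon A_1\to A_1$ defined by $\tau(X)=Y$, $\tau(Y)=-X$ (which sends counterexamples to counterexamples), every configuration listed in Remark~\ref{casos soporte} falls into one of the seven cases of the Proposition. Since $\tau$ acts on supports by reflection across the diagonal $y=x$, it interchanges case~1c) with~1b) and case~2b) with~2c); hence it suffices to handle cases~1a), 1b), 2a), 2c), 3) of the Remark.

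Cases~2a) and~3) match cases~4) and~7) of the Proposition verbatim. For case~1b), where $\ell_{1,1}(P)=\lambda y^n$, Theorem~4.1(3) of~\cite{GGV1} yields $v_{1,-1}(\en_{1,0}(P))\ne 0$, and the two signs place $P$ in cases~2) and~3). For case~1a), with $\ell_{1,1}(P)=\lambda x^i y^j$ and $i,j\ge 1$, I would apply Proposition~\ref{F existe} at $(1,1)$ to force $\psi(R)=\gamma X^a Y^b$ with $ka=i$ and $kb=j$; then a direct commutator computation with $F=\alpha X^2+\beta XY+\gamma' Y^2$ yields $\alpha=\gamma'=0$ and $\beta(b-a)=1$, in particular $a\ne b$. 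Using this tight form of $F$ together with~\cite{GGV1}*{Theorem~4.1}, I would argue that $P$ must in fact be subrectangular with vertex $(i,j)$; after replacing $P$ by $\tau(P)$ if $i>j$, one has $v_{1,-1}(\en_{1,0}(P))=i-j<0$, so case~1) of the Proposition applies. Finally, for case~2c), I would show $\en_{1,0}(P)=\st_{1,1}(P)$: otherwise $\HH(P)$ has an edge with outward normal $(\rho,\sigma)\in ](1,0),(1,1)[$, and applying Proposition~\ref{F existe} at $(\rho,\sigma)$ together with~\cite{GGV1}*{Corollary~4.4} would contradict $\#\factors(f_P)=1$. Once the alignment is established, the sign of $v_{1,-1}(\st_{1,1}(P))=a-b$, nonzero by~\cite{GGV1}*{Theorem~4.1(3)}, selects case~5) or~6) of the Proposition.

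The main obstacle is the verification hidden inside cases~1a) and~2c): the subrectangularity of $P$ in case~1a) and the alignment $\en_{1,0}(P)=\st_{1,1}(P)$ in case~2c). Both should follow by tracing the $k$-th power structure provided by Proposition~\ref{F existe} at appropriate auxiliary directions and combining it with Theorem~4.1 and Corollary~4.4 of~\cite{GGV1}.
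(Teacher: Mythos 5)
Your overall reduction structure matches the paper's: the $\tau$-symmetry, the verbatim identification of Cases~2a), 3) of Remark~\ref{casos soporte} with Cases~4), 7) of Proposition~\ref{proposicion principal}, and the sign-of-$v_{1,-1}(\en_{1,0}(P))$ dichotomy for Case~1b) are all exactly the paper's steps. The genuine gap is in the two hard cases, 1a) and 2c), which you yourself flag as the main obstacle but do not resolve, and where your proposed route diverges from the paper's and does not clearly close.

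For Case~1a), the paper does not derive subrectangularity from the structure of $R$ and $F$ at $(1,1)$. It instead invokes~\cite{GGV1}*{Lemma~6.5} to reduce subrectangularity to the statement $\Dir(P)\cap\,](1,0),(0,1)[\,=\emptyset$, and then kills any putative $(\rho,\sigma)$ in that interval with~\cite{GGV1}*{Lemma~6.4}: if $0<\sigma<\rho$, Lemma~6.4(2) forces $\Supp(\ell_{1,1}(P))=\{(0,v_{1,1}(P))\}$, contradicting $i>0$; the mirror case $0<\rho<\sigma$ is handled by Lemma~6.4(1). Your computation pinning $F$ to $\beta XY$ with $\beta(b-a)=1$ is correct as far as it goes, but you give no argument that this constrains $\Dir(P)$ away from $](1,0),(0,1)[$, and it is not clear how the $(1,1)$-level information about $R$ and $F$ alone would do so. Without Lemmas~6.4 and~6.5 (or a substitute), subrectangularity is not established.

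For Case~2c), the paper again uses~\cite{GGV1}*{Lemma~6.4(2)}: if $\Pred_{P}(1,1)=(\rho,\sigma)$ had $0<\sigma<\rho$, that lemma would force $\ell_{1,1}(P)$ to be a monomial, contradicting $\#\factors(f_P)=1$. Your proposal replaces this by ``apply Proposition~\ref{F existe} at $(\rho,\sigma)$ and Corollary~4.4 to contradict $\#\factors(f_P)=1$,'' but this conflates two different polynomials: the hypothesis $\#\factors(f_P)=1$ concerns $f_{P,1,1}$, while Proposition~\ref{F existe} and Corollary~4.4 applied at $(\rho,\sigma)$ only control $f_{P,\rho,\sigma}$ and $f_{F,\rho,\sigma}$. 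Moreover, when $\sigma\ge 2$ the support constraint $v_{\rho,\sigma}(F)=\rho+\sigma$ does force $F$ to be the monomial $XY$, which would contradict $(\rho,\sigma)\in\Dir(P)$; but when $\sigma=1$, $F$ may be supported on $\{(1,1),(0,1+\rho)\}$, $f_F$ may have a linear factor, and the argument does not terminate. So your proposed contradiction neither targets the right polynomial nor covers all subcases. Both gaps are filled in the paper by the geometric Lemma~6.4, which you should invoke explicitly.
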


\begin{proof} The strategy of the proof is to verify that the cases in Proposition~\ref{proposicion principal} cover all cases of Remark~\ref{casos soporte}. We first consider Case~1a) of that remark. Modifying if necessary $P$ and $Q$ via the morphism $\tau\colon A_1\to A_1$, given by $\tau(X)\coloneqq Y$ and $\tau(Y)\coloneqq -X$ (which preserves the mass $m(P)$), we can assume that $j\ge i$. Actually $j>i$, because $i=j$ is impossible by~\cite{GGV1}*{Theorem~4.1(3)}. So $v_{1,-1}(\en_{1,1}(P))<0$. We claim that $P$ is subrectangular. Since $P\notin K[X]\cup K[Y]$, by~\cite{GGV1}*{Lemma~6.5} we have to prove that
$$
\Dir(P)\cap I=\emptyset, \quad \text{where}\quad I=\{ (\rho,\sigma)\in \mathfrak{V}: (1,0)<(\rho,\sigma)<(0,1) \}.
$$
Assume by contradiction that $(\rho,\sigma)\in \Dir(P)$ and that $(1,0)<(\rho,\sigma)<(0,1)$. Since $\ell_{1,1}(P)$ is a monomial, necessarily $(1,0)<(\rho,\sigma)<(1,1)$ or $(1,1)<(\rho,\sigma)<(0,1)$. In other words $0<\sigma<\rho$ or $0<\rho<\sigma$. In the first case, by~\cite{GGV1}*{Lemma~6.4(2)} we have $\Supp(\ell_{1,1}(P)=\{(0,v_{1,1}(P))\}$ which contradicts that $\Supp(\ell_{1,1}(P)=(i,j)$ with $i,j>0$. The second case follows similarly, but using item~1) of~\cite{GGV1}*{Lemma~6.4} instead of item~2). Thus $P$ is subrectangular, and so $\en_{1,1}(P) = \en_{1,0}(P)$. But then $v_{1,-1}(\en_{1,0}(P))<0$ and Case~1a) is covered.

Case~1b) of Remark~\ref{casos soporte} follows from Cases~2) and~3) of Proposition~\ref{proposicion principal}, because $v_{1,-1}(\en_{1,0}(P))=0$ is impossible by~\cite{GGV1}*{Theorem~4.1(3)}; while Case~1c) reduces to Case~1b) using the morphism $\tau\colon A_1\to A_1$. By the same argument Case~2b) reduces to Case~2c). We next prove Case~2c). For this note that
$$
(\rho,\sigma)\coloneqq \Pred_{1,1}(P)\le (1,0),
$$
because otherwise $0<\sigma<\rho$ and~\cite{GGV1}*{Lemma~6.4(2)} impliess that $\ell_{1,1}(P)$ is a monomial, which is false. Hence $\en_{1,0}(P) = \st_{1,1}(P)$, and so, Cases~5) and~6) of Proposition~\ref{proposicion principal} cover Case~2c), because $v_{1,-1}(\en_{1,0}(P))=0$ is impossible. Finaly Cases~2a) and~3) of Remark~\ref{casos soporte} are covered by Cases~4) and~7) of Proposition~\ref{proposicion principal}, respectively.
\end{proof}

\begin{corollary}[Compare with~\cite{HT}*{Theorem~1.2}]\label{resultado principal} Assume that $P,Q\in A_1$ and $[P,Q]=1$. If $P$ is a sum of not more than~$4$ homogeneous elements of $A_1$, then $P$ and $Q$ generate $A_1$.
\end{corollary}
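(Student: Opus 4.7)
The plan is to derive this corollary directly from Theorem~\ref{central} by contraposition. Suppose toward a contradiction that $P$ and $Q$ do not generate $A_1$; then, since $[P,Q]=1$, the pair $(P,Q)$ is by definition a counterexample to the DC. My strategy is to establish $m(P)\le 4$ from the hypothesis on $P$, and then invoke Theorem~\ref{central} to obtain $m(P)\ge 5$, which is absurd.

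To justify $m(P)\le 4$, recall the $\mathds{Z}$-graduation on $A_1$ described in the introduction, whose $i$-th homogeneous component is $K[YX]X^i$ for $i\ge 0$ and $K[YX]Y^{-i}$ for $i<0$. Writing $P=P_1+\dots+P_r$ with $r\le 4$ and each $P_\ell$ homogeneous, and grouping those $P_\ell$ that lie in the same component of the grading, one obtains the canonical decomposition of $P$ into its homogeneous components with at most $r\le 4$ nonzero summands. By the definition of $m(P)$ given in the introduction, this yields $m(P)\le 4$.

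The one mild technicality is that Theorem~\ref{central} is proved under the blanket assumption that $K$ is algebraically closed. To reduce to this case, I would set $\bar A_1\coloneqq A_1\otimes_K\bar K$, with $\bar K$ an algebraic closure of $K$, and observe that the relation $[P,Q]=1$ persists in $\bar A_1$, that $m(P)$ is unaltered under this base change (the homogeneous decomposition of $P$ being already defined over $K$), and that the $K$-algebra map $A_1\to A_1$ sending $X\mapsto P$, $Y\mapsto Q$ is surjective if and only if its $\bar K$-linear extension $\bar A_1\to \bar A_1$ is surjective, by faithfully flat descent of surjectivity. Hence $(P,Q)$ remains a counterexample to the DC after base change, so Theorem~\ref{central} yields $m(P)\ge 5$, contradicting $m(P)\le 4$. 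There is no substantial obstacle beyond these formalities; the entire mathematical content of the corollary is packaged into Theorem~\ref{central}.
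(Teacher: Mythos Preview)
Your proposal is correct and matches the paper's approach: the corollary is stated without proof, as it follows immediately from Theorem~\ref{central} by contraposition, and the reduction to algebraically closed $K$ is exactly the one announced in the introduction (``the mass does not change by scalar extensions''). Your faithfully-flat-descent remark for the surjectivity of $X\mapsto P$, $Y\mapsto Q$ is a clean way to phrase that reduction.
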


\begin{remark} For $(\rho,\sigma)=(3,-1)$ the $(\rho,\sigma)$-homogeneous elements
$$
R=X+2X^2Y^3+X^3Y^6\quad \text{and}\quad F=-XY-X^2Y^4
$$
satisfy $[R,F]_{\rho,\sigma}=\ell_{\rho,\sigma}(R)$ and $m(R^2)=5$. Thus, in order to improve the lower bound of~$5$ we just achieved, one has to change the strategy. For example, one can consider the list of possible smallest counterexamples as in~\cite{GGV2}*{Remark~7.9},  or the more detailed list given in~\cite{GGHV}*{Section~5}, and make a thorough analysis taking account the small differences between  the Dixmier Conjecture and the plane Jacobian conjecture in this geometric approach. One also has to consider that one of the main tools in this approach,~\cite{GGV1}*{Proposition~3.9} (which ``cuts'' the shape of the support), does not preserve the mass of the elements.
\end{remark}

\begin{bibdiv}
\begin{biblist}

\bib{AE}{article}{
   author={Adjamagbo, Pascal Kossivi},
   author={van den Essen, Arno},
   title={A proof of the equivalence of the Dixmier, Jacobian and Poisson
   conjectures},
   journal={Acta Math. Vietnam.},
   volume={32},
   date={2007},
   number={2-3},
   pages={205--214},
   issn={0251-4184},
   review={\MR{2368008 (2009a:14079)}},
}

\bib{BCW}{article}{
   author={Bass, Hyman},
   author={Connell, Edwin},
   author={Wright, David},
   title={The Jacobian conjecture: reduction of degree and formal
              expansion of the inverse},
   journal={Bull. Amer. Math. Soc. (N.S.)},
   volume={7},
   date={1982},
   number={2},
   pages={287--330},
   issn={0273-0979},
   review={\MR{663785 (83k:14028)}},
}

\bib{B1}{article}{
   author={Bavula, Vladimir V.},
   title={The Jacobian conjecture$_{2n}$ implies the Dixmier problem$_n$},
   conference={
      title={Algebraic structures and applications},
   },
   book={
      series={Springer Proc. Math. Stat.},
      volume={317},
      publisher={Springer, Cham},
   },
   isbn={978-3-030-41850-2},
   isbn={978-3-030-41849-6},
   date={2020},
   pages={411--426},
   review={\MR{4113914}},
}

\bib{BL}{article}{
   author={Bavula, V. V.},
   author={Levandovskyy, V.},
   title={A remark on the Dixmier conjecture},
   journal={Canad. Math. Bull.},
   volume={63},
   date={2020},
   number={1},
   pages={6--12},
   issn={0008-4395},
   review={\MR{4059802}},
   doi={10.4153/s0008439519000122},
}

\bib{BK}{article}{
   author={Belov-Kanel, Alexei},
   author={Kontsevich, Maxim},
   title={The Jacobian conjecture is stably equivalent to the Dixmier
   conjecture},
   language={English, with English and Russian summaries},
   journal={Mosc. Math. J.},
   volume={7},
   date={2007},
   number={2},
   pages={209--218, 349},
   issn={1609-3321},
   review={\MR{2337879 (2009f:16041)}},
}

\bib{D}{article}{
   author={Dixmier, Jacques},
   title={Sur les alg\`ebres de Weyl},
   language={French},
   journal={Bull. Soc. Math. France},
   volume={96},
   date={1968},
   pages={209--242},
   issn={0037-9484},
   review={\MR{0242897 (39\#4224)}},
}

\bib{GGV1}{article}{
   author={Guccione, Jorge A.},
   author={Guccione, Juan J.},
   author={Valqui, Christian},
   title={The Dixmier conjecture and the shape of possible counterexamples},
   journal={J. Algebra},
   volume={399},
   date={2014},
   pages={581--633},
   issn={0021-8693},
   review={\MR{3144604}},
   doi={10.1016/j.jalgebra.2013.10.011},
}

\bib{GGV2}{article}{
   author={Valqui, Christian},
   author={Guccione, Jorge A.},
   author={Guccione, Juan J.},
   title={On the shape of possible counterexamples to the Jacobian
   Conjecture},
   journal={J. Algebra},
   volume={471},
   date={2017},
   pages={13--74},
   issn={0021-8693},
   review={\MR{3569178}},
   doi={10.1016/j.jalgebra.2016.08.039},
}

\bib{GGHV}{article}{
author={Guccione, Jorge Alberto},
author={Guccione, Juan Jos\'e},
author={Horruitiner, Rodrigo},
author={Valqui, Christian},
   title={Some algorithms related to the Jacobian Conjecture},
   eprint={arXiv:1708.07936}
}

\bib{HT}{article}{
   author={Gang Han},
   author={Bowen Tan},
   title={Some progress in the Dixmier conjecture for $A_1$},
   journal={Communications in Algebra},
   date={2023},
   pages={1--19},
}

\bib{T}{article}{
   author={Tsuchimoto, Yoshifumi},
   title={Endomorphisms of Weyl algebra and $p$-curvatures},
   journal={Osaka J. Math.},
   volume={42},
   date={2005},
   number={2},
   pages={435--452},
   issn={0030-6126},
   review={\MR{2147727 (2006g:14101)}},
}

\end{biblist}
\end{bibdiv}
\end{document}